\documentclass[12pt,a4paper]{article}
\usepackage[a4paper,margin=2cm]{geometry}

\usepackage{amsmath}
\usepackage{amsthm}
\usepackage{subcaption}
\usepackage{amssymb}
\usepackage{enumitem}
\usepackage{graphicx}
\usepackage{nccmath}
\usepackage{setspace}
\usepackage[colorlinks=true, allcolors=blue]{hyperref}
\usepackage{enumerate}
\usepackage{multirow}
\usepackage{float}
\usepackage[dvipsnames]{xcolor}

\newtheorem{theorem}{Theorem}

\newtheorem{lemma}[theorem]{Lemma}

\theoremstyle{definition}

\def \mod#1{{\:({\rm mod}\ #1)}}

\def \Z{\mathbb{Z}}
\def \A{\mathcal{A}}
\def \B{\mathcal{B}}
\def \D{\mathcal{D}}

\def \leq {\leqslant}
\def \geq {\geqslant}

\def \Lc {\overline{L}}

\let\oldproofname=\proofname
\renewcommand{\proofname}{\textup{\textbf{\oldproofname}}}

\title{Completing partial $k$-star designs}
\author{Ajani De Vas Gunasekara\thanks{School of Arts and Sciences, The University of Notre Dame Australia, NSW 2007, Australia \quad ajani.de.vas.gunasekara@nd.edu.au}  \qquad  Daniel Horsley\thanks{School of Mathematics, Monash University, Victoria 3800, Australia \quad danhorsley@gmail.com}}
\date{}

\setstretch{1.1}
\begin{document}

\maketitle

\begin{abstract}
A \emph{$k$-star} is a complete bipartite graph $K_{1,k}$. A \emph{partial $k$-star design of order $n$} is a pair $(V,\A)$ where $V$ is a set of $n$ vertices and $\A$ is a set of edge-disjoint $k$-stars whose vertex sets are subsets of $V$. If each edge of the complete graph with vertex set $V$ is in some star in $\A$, then $(V,\A)$ is a (complete) \emph{$k$-star design}. We say that $(V,\A)$ is \emph{completable} if there is a $k$-star design $(V,\B)$ such that $\A \subseteq \B$. In this paper we determine, for all $k$ and $n$, the minimum number of stars in an uncompletable partial $k$-star design of order $n$.
\end{abstract}

\section{Introduction}

For many classes of combinatorial design it is the case that all ``small'' partial designs can be completed to a full design, but that this is not the case for larger partial designs. For any such class of designs it is natural to try to determine the size threshold at which uncompletable partial designs first appear. Latin squares give one famous example of this. Smetaniuk \cite{Smetaniuk1981} and Anderson and Hilton \cite{AndersonHilton1983} independently proved a conjecture of Evans \cite{Evans1960} that, for all $n \geq 2$, the smallest number of filled cells in an uncompletable $n \times n$ latin square is exactly $n$. Investigations in this spirit have been undertaken for many objects including block designs \cite{DeVHorEvans}, Hadamard Matrices \cite{Brock1988}, latin hypercubes \cite{BryEtAl}, and matchings in graphs \cite[Chapter 6]{YuLiu}. In this paper we give a solution to the problem for $k$-star designs. It seems this specific problem for $k$-star designs has not previously been considered in the literature, but related problems concerning embeddings of $k$-star designs have been addressed in \cite{DeVHorStar,HoffmanRoberts2014,NobNoc,NobleRichardson2019}. Essentially these papers show that every partial $k$-star design can be completed after some number of new vertices are added to it. For general $k$, the upper bound on this number of new vertices was successively improved in \cite{HoffmanRoberts2014}, \cite{NobleRichardson2019} and \cite{DeVHorStar}. The special case of $k=3$ was dealt with in \cite{NobNoc}.

Let $k \geq 2$ be an integer. A \emph{$k$-star} is a copy of $K_{1,k}$, the complete bipartite graph with parts of size 1 and $k$. A \emph{$k$-star decomposition} of a graph $G$ is a set of $k$-stars whose edge sets partition $E(G)$. For a set $V$, let $K_V$ denote the complete graph with vertex set $V$. A \emph{partial $k$-star design of order $n$} is a pair $(V,\A)$ where $V$ is a set of $n$ vertices and $\A$ is a $k$-star decomposition of some subgraph $G$ of $K_V$. If $G=K_V$, then $(V,\A)$ is a \emph{$k$-star design}. We say a positive integer $n$ is \emph{$k$-admissible} if $\binom{n}{2} \equiv 0 \mod{k}$. Tarsi \cite{Tarsi1979} and Yamamoto et al. \cite{Yamamoto1975} independently proved that, for $n \geq 2$, a $k$-star design of order $n$ exists if and only if $n \geq 2k$ and $n$ is $k$-admissible. A \emph{completion} of a partial $k$-star design $(V,\A)$ is a $k$-star design $(V,\B)$ such that $\A \subseteq \B$. Our main result is as follows.

\begin{theorem} \label{T: main theorem}
Let $k \geq 2$ be an integer. For each $k$-admissible integer $n$ such that $n \geq 2k$, any partial $k$-star design of order $n$ with at most $u(n,k)$ stars has a completion, where
\[u(n,k)=
\begin{cases}
2 \lfloor \frac{n-2}{k} \rfloor-1 & \text{if $n \not \equiv 1\mod{k}$}\\
\frac{2(n-1)}{k} - 2  & \text{if $n \equiv 1\mod{k}$.}
\end{cases}
\]
Furthermore, for each $k$-admissible integer $n > 1$, there is a partial $k$-star design of order $n$ with $u(n,k)+1$ stars that has no completion.
\end{theorem}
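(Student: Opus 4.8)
The plan is to prove the two halves of Theorem~\ref{T: main theorem} separately. The ``furthermore'' part---constructing, for every $k$-admissible $n>1$, an uncompletable partial $k$-star design with $u(n,k)+1$ stars---is the more constructive and should be tackled by a careful counting argument. The basic strategy is to choose a small set of stars that ``uses up'' the edges at a few vertices in such a way that the leave (the graph of uncovered edges of $K_V$) cannot be decomposed into $k$-stars. I would distinguish cases according to the residue of $n$ modulo $k$, since $u(n,k)$ itself is defined by cases.

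\medskip

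First I would set up the obstruction. If $(V,\A)$ is a partial $k$-star design with leave $L = K_V \setminus \bigcup_{S\in\A} E(S)$, then a completion exists only if $L$ has a $k$-star decomposition; in particular $|E(L)| \equiv 0 \mod k$ (which is automatic here since $n$ is $k$-admissible and $|\A|\cdot k$ is a multiple of $k$), and---more usefully---$L$ must admit an orientation in which every vertex has out-degree divisible by $k$, equivalently each component of $L$ with all degrees ``small'' relative to $k$ creates trouble. The cleanest concrete obstruction: if some vertex $v$ has $\deg_L(v)$ not a multiple of $k$, then $v$ must be a leaf in at least one star of any decomposition, i.e.\ some other vertex $w$ must be the centre of a star covering an edge at $v$; chaining this forces structure. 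The construction I would use is to take a small number of vertices and saturate them so that the remaining leave is a graph like $K_{1,m}$ plus isolated vertices, or a small clique, with the wrong number of edges at the centre. Concretely, for $n\equiv 1 \mod k$ one expects to delete stars centred at a single vertex together with a few others so that the leave is a single $k'$-star with $k' < k$ hanging off a vertex whose other edges are all covered---such a leave is manifestly not $k$-star-decomposable. One then checks the number of stars used is exactly $u(n,k)+1$. The non-trivial point is to make the arithmetic match $u(n,k)$ on the nose in every residue class; I would do this by first handling $n \equiv 1$, then $n \equiv 2$ (where $\binom n2$ forces $k$ even), then the generic $n \not\equiv 0,1,2$, reusing a common construction and adjusting by at most one or two extra stars at the end.

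\medskip

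For the positive (completability) direction I would argue by contradiction: suppose $(V,\A)$ is a partial $k$-star design of order $n$ with $|\A| \le u(n,k)$ that has no completion, and choose such an $(V,\A)$ with $|\A|$ minimum. Let $L$ be its leave. The key observation is that $L$ has few missing edges: $|E(K_V)\setminus E(L)| = k|\A| \le k\,u(n,k) \approx 2n$, so $L$ is ``dense'' and almost all its vertices have degree close to $n-1 \ge 2k-1$. I would then try to build a $k$-star decomposition of $L$ greedily, but steered by the degree sequence. The standard tool here is a theorem of the form ``a graph $G$ has a $k$-star decomposition iff $|E(G)| \equiv 0 \mod k$ and $G$ has no component that is too sparse'' --- more precisely, one uses that $K_n$ minus a bounded-size graph on $n \ge 2k$ vertices can be $k$-star-decomposed with prescribed behaviour; I would derive what I need from the Tarsi/Yamamoto existence result \cite{Tarsi1979,Yamamoto1975} applied to auxiliary graphs. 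The plan: pick an ordering $v_1,\dots,v_n$ of $V$ by increasing $L$-degree, then iteratively remove stars centred at the vertices to reduce $L$ to $K_V$ minus an even smaller leave, maintaining divisibility-by-$k$ invariants on remaining degrees. Because $|\A|$ is so small, only $O(1)$ vertices are ``damaged'', and the bulk of $L$ looks like a near-complete graph, which is easy to decompose.

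\medskip

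The main obstacle, I expect, is the boundary behaviour: matching the threshold exactly rather than up to a constant. Proving that $u(n,k)+1$ stars can fail is a matter of exhibiting one clever configuration, but proving that $u(n,k)$ stars always succeed requires a decomposition argument that is tight against that configuration---so the inductive/greedy step must never get ``stuck'' with a leave that has exactly the pathological shape from the lower-bound construction. I anticipate that the heart of the proof is a lemma saying: if $H$ is a graph on $n \ge 2k$ vertices with at most $2n$ edges \emph{missing} from $K_n$ and with $|E(H)|\equiv 0\pmod k$, and $H$ is not one of a short explicit list of exceptions, then $H$ has a $k$-star decomposition; moreover the exceptions correspond precisely to leaves forced by more than $u(n,k)$ stars. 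Establishing this lemma---in particular pinning down the exact exception list and verifying that leaves of $\le u(n,k)$ stars avoid it---is where the real work lies, and I would expect it to require a delicate argument splitting on the number of low-degree vertices of $H$ (there can be at most a bounded number) and on $n \bmod k$.
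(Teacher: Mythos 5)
Your plan has a genuine gap in each direction. For the ``furthermore'' construction, the configuration you propose --- choosing stars so that the leave is a single $k'$-star ($k'<k$) plus isolated vertices --- cannot be realised with $u(n,k)+1\approx 2n/k$ stars: covering all but $k'$ edges of $K_n$ takes roughly $\tbinom{n}{2}/k$ stars, a factor of order $n$ too many. The obstruction you want is far more local. With only $u(n,k)+1$ stars one can make \emph{two adjacent} vertices $x_1,x_2$ each the centre of $\lfloor\frac{n-2}{k}\rfloor$ stars (when $n\not\equiv 1\pmod{k}$), so that $1\le\deg_L(x_i)\le k-1$ for $i=1,2$; then neither can be a centre in any $k$-star decomposition of the leftover $L$, so the edge $x_1x_2$ is uncoverable, even though $L$ remains dense everywhere else. (When $n\equiv1\pmod{k}$ one uses three vertices, making $x_1$ and $x_2$ leaves of a star centred at a third vertex to push $\deg_L(x_1)=\deg_L(x_2)=k-1$ with one fewer star.) Note also that for $k\ge3$ the residue $n\equiv2\pmod{k}$ is never $k$-admissible, so that case does not arise.

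For the completability direction, the greedy/inductive scheme is the step that will fail: you rightly observe the argument must be tight against the extremal configuration, but ``iteratively remove stars centred at the vertices'' gives no control at the threshold, and the exception-list lemma you anticipate comes with no mechanism for proving it. The missing tool is Tarsi's characterisation of realisable centre functions: a function $p:V(G)\to\Z^{\geq 0}$ with $p(V(G))=\frac{1}{k}|E(G)|$ is the centre-multiplicity function of some $k$-star decomposition of $G$ if and only if, for every nonempty proper $T\subseteq V(G)$, the number of edges of $G$ meeting $T$ is at least $k\,p(T)$. With this in hand one does not decompose greedily at all; one \emph{writes down} a candidate $p$ (essentially $p(x)\approx\frac{1}{2k}\deg_L(x)$, rounded, with small local repairs at vertices of degree less than $k$ and at edges both of whose ends receive value $0$) and verifies the cut condition for all $T$, splitting on $|T|$ and using the hypothesis $|\A|\le u(n,k)$ to bound leftover degrees from below. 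The exceptional leaves are excluded not by classification but by those forced degree bounds; without the realisability criterion (or an equivalent flow/defect-Hall argument) the positive half of the theorem does not get off the ground.
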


In Section~\ref{S:uncompletable} we will exhibit uncompletable partial $k$-star designs to prove the `furthermore' claim in Theorem~\ref{T: main theorem}. Some preliminaries and our overall approach to finding completions of partial $k$-star designs are given in Section~\ref{S:strategy}. Then we will find the required completions for partial $k$-star designs of $k$-admissible orders less than or equal to $3k+1$. For orders larger than $3k+1$, our approach relies on defining a certain function from the vertices of the partial design to the nonenegative integers. We define these functions in Section~\ref{S:suitable} and show that they possess certain desirable properties. In Section~\ref{S:realising} we then use these functions to complete the proof of Theorem~\ref{T: main theorem}. A possible direction for future work is discussed in Section~\ref{S:conclusion}

\section{Uncompletable partial designs}\label{S:uncompletable}

In this section we will exhibit the uncompletable partial $k$-star designs we require for Theorem~\ref{T: main theorem}. We begin by introducing some more definitions and notation that we will use throughout the paper.

In a $k$-star with $k \geq 2$, the vertex of degree $k$ is the \emph{centre} and each other vertex is a \emph{leaf}. Let $G$ be a graph. We denote the complement of $G$ by $\overline{G}$. For a subset $S$ of $V(G)$, we denote the subgraph of $G$ induced by $S$ by $G[S]$. For a given $k$-star decomposition $\D$ of $G$, we can define a function $c: V(G) \rightarrow \Z^{\geq 0}$ called the \emph{central function}, where $c(x)$ is the number of $k$-stars of $\D$ whose centre is $x$ for each $x \in V(G)$. The \emph{leftover} $L$ of a partial $k$-star design $(V,\A)$ is the graph with vertex set $V$ in which each edge of $K_V$ is present if and only if it is not an edge of a star in $\A$. Of course, a partial $k$-star design is completable if and only if there is a $k$-star decomposition of its leftover. Note that $\overline{L}$ is the graph with vertex set $V$ whose edges are exactly the edges in $k$-stars in $\A$. 

As we mentioned above, the orders for which a $k$-star design exists have been completely determined.

\begin{theorem} [\cite{Tarsi1979, Yamamoto1975}]  \label{T:designExistence}
A $k$-star design of order $n$ exists if and only if $n$ is $k$-admissible and $n=1$ or $n \geq 2k$.
\end{theorem}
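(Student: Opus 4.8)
The plan is to prove both directions. For necessity, I would first note that since the $k$-stars are edge-disjoint and their edges partition $E(K_V)$, and each star has exactly $k$ edges, we have $k \mid \binom n2$; this is $k$-admissibility. For the bound $n \ge 2k$ when $n > 1$, I would argue by contradiction. If $2 \le n \le k$ then no copy of $K_{1,k}$ fits inside $K_n$, since a $k$-star needs $k+1$ vertices, yet $K_n$ has an edge; so suppose instead $k+1 \le n \le 2k-1$. Every vertex then has degree $n - 1 < 2k$, and since a star centred at $x$ consumes $k$ of the edges at $x$, the central function satisfies $c(x) \le 1$ for all $x$; thus distinct stars have distinct centres. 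I would then observe that a vertex $x$ with $c(x) = 0$ has all $n-1$ of its edges lying in stars centred elsewhere, and as each other vertex centres at most one star, these must be $n-1$ distinct stars with distinct centres among the remaining $n-1$ vertices, forcing every other vertex to be a centre. Hence at most one vertex is a non-centre. Counting stars two ways, namely $\binom n2 / k$ equals the number of centres, which is $n$ or $n-1$, yields $n = 2k+1$ or $n = 2k$ respectively, both contradicting $n \le 2k - 1$.

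For sufficiency I would use an orientation reformulation. The key lemma is that $K_n$ admits a $k$-star decomposition if and only if the edges of $K_n$ can be oriented so that every out-degree is divisible by $k$: given such an orientation, at each vertex $x$ I group the out-edges into blocks of $k$, each block together with $x$ forming a $K_{1,k}$ centred at $x$, and these stars are edge-disjoint and cover $E(K_n)$; conversely a decomposition is oriented by directing each star's edges away from its centre, so that the out-degree of $x$ equals $k\,c(x)$. The problem thus reduces to finding a tournament on $n$ vertices whose score sequence consists of nonnegative multiples of $k$, each at most $n-1$, summing to $\binom n2$. I would produce such a sequence by taking the scores as balanced as possible, each close to $\tfrac{n-1}{2}$ and rounded to a multiple of $k$, using $k$-admissibility to make the total come out to exactly $\binom n2$, and then invoke Landau's theorem on tournament score sequences to realise it. The case $n=1$ is the empty decomposition.

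The main obstacle is the sufficiency construction: exhibiting an explicit balanced score sequence of multiples of $k$ and verifying Landau's inequalities $\sum_{i=1}^{j} s_i \ge \binom j2$ for all $j$, with equality at $j=n$. The hypothesis $n \ge 2k$ is exactly what guarantees enough room, since the largest admissible score $k\lfloor (n-1)/k\rfloor$ is then at least $k$, so nonzero multiples of $k$ are available, and the scores can be centred near $\tfrac{n-1}{2}$ where Landau's lower bounds are comfortably met. I expect the boundary order $n = 2k$ to be the tightest case: there the only admissible scores are $0$ and $k$, which forces the sequence $(0,k,\dots,k)$ with $2k-1$ entries equal to $k$, and I would check this directly as a sanity test, confirming that its partial sums satisfy $(j-1)k \ge \binom j2$ for $j \le 2k$, before handling the general balanced sequence.
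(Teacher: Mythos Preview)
The paper does not prove this theorem; it is quoted with citation to Tarsi and Yamamoto et~al.\ and used as a known result throughout. So there is no proof in the paper to compare your proposal against.

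On its own merits, your plan is correct. The necessity argument is complete and valid; as a minor streamlining, once you have established that at most one vertex is a non-centre you know the number of stars is at least $n-1$, and $\binom{n}{2}/k \ge n-1$ alone already forces $n \ge 2k$, so the case split into $n-1$ versus $n$ centres is unnecessary. For sufficiency, the orientation reformulation is exactly right and reducing to Landau's tournament criterion is a standard and efficient route. The one piece of real work you have left unwritten is the construction and verification of the score sequence for general $k$-admissible $n \ge 2k$. Taking $a = \lfloor (n-1)/(2k) \rfloor$ and using only the two values $ak$ and $(a+1)k$ does succeed: one checks $(a+1)k \le n-1$ using $n \ge 2k$, the number $m$ of vertices assigned score $(a+1)k$ is $\binom{n}{2}/k - na \in \{0,\ldots,n\}$ by $k$-admissibility and the definition of $a$, and the Landau inequalities for the sorted sequence reduce after a short calculation precisely to the hypothesis $n \ge 2k$. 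Your $n = 2k$ sanity check is correct and is indeed the tight instance.
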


So, of course, any partial $k$-star design whose order is not $k$-admissible or is in $\{2,\ldots,2k-1\}$ must be uncompletable.
In Lemma~\ref{L:tightness}(i) and (ii) below, we establish the tightness of the bound in Theorem~\ref{T: main theorem}.

\begin{lemma}\label{L:tightness} Let $k \geq 2$ be an integer.
\begin{itemize}
    \item[\textup{(i)}]
For all $k$-admissible integers $n > 1$ such that $n  \not \equiv 1\mod{k}$ there is a partial $k$-star design of order $n$ with $2 \lfloor \frac{n-2}{k} \rfloor$  stars that is not completable.
    \item[\textup{(ii)}]
For all $k$-admissible integers $n > 1$ such that $n  \equiv 1\mod{k}$ there is a partial $k$-star design of order $n$ with $\frac{2n-2}{k}-1$  stars that is not completable.

\end{itemize}
\end{lemma}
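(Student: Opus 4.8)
The strategy is to first isolate one very simple structural obstruction to completability, and then, for each residue class of $n$ modulo $k$, to build a leftover graph that both displays this obstruction and is as ``large'' as possible, so that its complement (the graph of used edges) has as few edges as possible. The obstruction I would use is: \emph{if the leftover $L$ of a partial $k$-star design $(V,\A)$ contains an edge $uv$ with $\deg_L(u)\leq k-1$ and $\deg_L(v)\leq k-1$, then $(V,\A)$ is not completable.} Indeed, in any $k$-star decomposition of $L$ the edge $uv$ lies in a $k$-star whose centre is $u$ or $v$, but the centre of a $k$-star has degree at least $k$ in $L$. Since $\overline{L}$ is exactly the graph whose edges lie in stars of $\A$, a partial design with leftover $L$ has precisely $|E(\overline{L})|/k$ stars and exists if and only if $\overline{L}$ has a $k$-star decomposition; so it suffices, in each case, to exhibit a $k$-star-decomposable spanning subgraph $H$ of $K_V$ with the prescribed number of edges whose complement contains such an edge $uv$ — equivalently, $H$ should contain two non-adjacent vertices $u,v$ with $\deg_H(u),\deg_H(v)\geq n-k$.

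\textbf{Case (i): $n\not\equiv 1\mod k$.} Let $M$ be the least integer with $Mk\geq n-k$. A short case split on whether $n\equiv 0$ or $n\not\equiv 0,1\mod k$ shows $2M=2\lfloor\frac{n-2}{k}\rfloor$, and since $n\not\equiv 1\mod k$ forces $Mk\neq n-1$ one gets $Mk\leq n-2$ (when $n\leq k$ we have $M=0$ and the empty partial design already has leftover $K_V$, which is undecomposable as $1<n<2k$). I would take $H$ to be the complete bipartite graph with parts $\{u,v\}$ and an $Mk$-subset $S$ of the remaining vertices, all other vertices isolated. Then $H$ decomposes into $M$ stars centred at $u$ and $M$ centred at $v$, so the resulting partial design has $2M=2\lfloor\frac{n-2}{k}\rfloor$ stars; and in $\overline{L}=K_V\setminus E(H)$ we have $\deg(u)=\deg(v)=n-1-Mk\leq k-1$ while $uv$ is an edge, so the obstruction applies.

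\textbf{Case (ii): $n\equiv 1\mod k$.} Write $n=mk+1$. If $m=1$, a single $k$-star on $n=k+1$ vertices already works: its $k$ leaves induce a nonempty graph of maximum degree $k-1$, and $1=\frac{2n-2}{k}-1$. If $m\geq 2$, I would build $H$ on vertices $u,v,w$ together with a set $S$ of $(m-1)k$ further vertices (all fitting inside $n=mk+1$ vertices since $3+(m-1)k\leq mk+1$), joining each of $u$ and $v$ to every vertex of $S\cup\{w\}$, and joining $w$ to $k-2$ chosen vertices of $S$. Then $\deg_H(w)=k$, $\deg_H(u)=\deg_H(v)=(m-1)k+1\geq n-k$, and $u\not\sim v$; moreover $H$ decomposes into one star centred at $w$ (leaves $u$, $v$, and the $k-2$ chosen vertices of $S$), together with $m-1$ stars centred at $u$ and $m-1$ centred at $v$ covering the edges to $S$ — that is $2m-1=\frac{2n-2}{k}-1$ stars in total. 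In the complement $\deg(u)=\deg(v)=k-1$ and $uv$ is an edge, so the design is uncompletable.

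\textbf{Main obstacle.} The conceptual content is entirely in the one-line obstruction above; the real work is the bookkeeping, and it is concentrated in case (ii). The naive analogue of the case-(i) construction (a $K_{2,(m-1)k+1}$ between $\{u,v\}$ and $S$) would use $2m$ stars, one too many; squeezing out the last star is exactly what forces the auxiliary vertex $w$ carrying two of $u$'s and $v$'s needed edges plus $k-2$ extra edges, and one must check carefully that the resulting $H$ still fits in $n=mk+1$ vertices and genuinely admits the claimed decomposition. The parallel but easier point in case (i) is verifying $2M=2\lfloor\frac{n-2}{k}\rfloor$ and $Mk\leq n-2$ uniformly across the residues, together with the small-order degenerate cases.
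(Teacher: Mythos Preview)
Your proof is correct and follows essentially the same approach as the paper: both identify the obstruction that an edge of the leftover whose endpoints each have degree at most $k-1$ blocks completion, and both realise it via the same constructions --- in case (i) two vertices each centring $\lfloor\frac{n-2}{k}\rfloor$ stars with disjoint leaf sets avoiding each other, and in case (ii) the same two vertices together with a third vertex centring one star that has both of them as leaves. Your write-up is more explicit about the underlying graph $H$ and its decomposition, while the paper simply specifies the central function and leaves existence of the design implicit, but the content is the same.
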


\begin{proof}
We first prove (i). Let $(V,\A)$ be a partial $k$-star design of order $n$ with exactly
$2\lfloor \frac{n-2}{k} \rfloor$ stars and central function $c$ such that there exist distinct vertices $x_1, x_2 \in V$ for which $c(x_1) = c(x_2) = \lfloor \frac{n-2}{k} \rfloor$ and $x_1$ and $x_2$ are adjacent in the leftover $L$ of $(V,\A)$. Since $n  \not \equiv 1\mod{k}$, we have $\frac{n-k}{k} \leq \lfloor \frac{n-2}{k} \rfloor \leq \frac{n-2}{k}$ and hence $1 \leq \deg_L(x_i) \leq k-1$ for each $i \in \{1,2\}$. So, in a $k$-star decomposition of $L$, no star can be centred at $x_1$ or $x_2$ and hence no star can contain the edge $x_1x_2$. So $(V,\A)$ cannot be completed.

We now prove (ii). Let $(V,\A)$ be a partial $k$-star design of order $n$ with exactly $\frac{2n-2}{k} - 1$ stars and central function $c$  such that there exist distinct vertices $x_1, x_2,x_3 \in V$ for which $c(x_1) = c(x_2) =  \frac{n-k-1}{k}$, $c(x_3) = 1$, $x_1$ and $x_2$ are leaves of the star centred at $x_3$, and $x_1$ and $x_2$ are adjacent in the leftover $L$ of $(V,\A)$. For each $i \in \{1,2\}$, this implies $\deg_L(x_i) =  k-1$ because  $x_i$ is a leaf of one star and centre of $\frac{n-k-1}{k}$ stars. So, in a $k$-star decomposition of $L$, no star can be centred at $x_1$ or $x_2$ and hence no star can contain the edge $x_1x_2$. So $(V,\A)$ cannot be completed.
\end{proof}

\section{Strategy and preliminary results}\label{S:strategy}

Given Lemma~\ref{L:tightness}, to prove Theorem~\ref{T: main theorem} it remains to show that, for any $k \geq 2$ and $k$-admissible integer $n$, every partial $k$-star design with at most $u(n,k)$ stars has a completion. We begin by observing that we can always add a star to a partial $k$-star design of order $n$ with fewer than $u(n,k)$ stars.

\begin{lemma}\label{L:addStar}
Let $k$ and $n$ be integers such that $k \geq 2$ and $n \geq 2k$, and let $(V,\A')$ be a partial $k$-star design of order $n$ with fewer than $u(n,k)$ stars. The leftover $L'$ of $(V,\A')$ contains a $k$-star.
\end{lemma}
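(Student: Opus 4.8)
The plan is to show that the leftover $L'$ has a vertex of degree at least $k$; this vertex together with $k$ of its neighbours then forms the required $k$-star. So the whole argument reduces to a degree estimate in $L' = \overline{\Lc'}$ where $\Lc'$ is built from fewer than $u(n,k)$ stars.

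First I would bound the total number of edges in $\Lc'$. Each star contributes exactly $k$ edges, so $|E(\Lc')| \le k(u(n,k)-1)$. If every vertex of $L'$ had degree at most $k-1$, then every vertex of $\Lc'$ would have degree at least $n-1-(k-1) = n-k$, giving $|E(\Lc')| \ge \frac{n(n-k)}{2}$. Combining these, a completion-free situation would force $\frac{n(n-k)}{2} \le k(u(n,k)-1)$. I would then plug in the two cases of $u(n,k)$ and check that for $n \ge 2k$ this inequality fails. Roughly, the right-hand side is $O(n)$ while the left-hand side is $\Omega(n^2/k \cdot k) = \Omega(n(n-k))$, which is already quadratic-ish; more carefully, $k(u(n,k)-1) \le k \cdot \frac{2(n-1)}{k} = 2(n-1) < 2n$, whereas $\frac{n(n-k)}{2} \ge \frac{n \cdot k}{2}$ when $n \ge 2k$, and $\frac{nk}{2} \ge 2n$ as soon as $k \ge 4$. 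So a direct counting argument essentially works, with only the small cases $k \in \{2,3\}$ (and $n$ close to $2k$) needing a separate, quick check.

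Alternatively — and this may be cleaner to write — I would argue locally rather than globally: pick any vertex $v$ that is the centre of the fewest stars, or simply observe that since there are fewer than $u(n,k) \le \frac{2(n-1)}{k}$ stars and each star has one centre, the average number of stars centred at a vertex is less than $\frac{2(n-1)}{kn} < \frac{2}{k}$, so some vertex $v$ is the centre of at most one star (in fact the pigeonhole gives many such vertices). Then $\deg_{\Lc'}(v) \le 1\cdot k + (\text{leaf contributions})$; I would bound how many stars can have $v$ as a leaf — at most $u(n,k)-1$ of them, but I want a bound below $n-1-k$. Here the cleanest route is to count edges at $v$ in $\Lc'$: this is at most (number of edges of stars incident with $v$) $\le k \cdot (\text{stars centred at } v) + (\text{stars with } v \text{ as a leaf})$, and the total over all such contributions is again controlled by $k(u(n,k)-1) < 2n$. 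In fact summing $\deg_{\Lc'}$ over all vertices gives $2|E(\Lc')| < 4n$, so the minimum degree in $\Lc'$ is less than $4$, and since $n - 1 - (n-k) $... — more simply, some vertex has $\deg_{\Lc'}(v) \le 3$, hence $\deg_{L'}(v) \ge n - 4 \ge k$ whenever $n \ge k+4$, again leaving a handful of small cases.

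The main obstacle I anticipate is not the generic argument but the boundary cases: $k \in \{2,3\}$ together with $n$ just above $2k$, and orders where $u(n,k)$ is small (e.g. $u(n,k) = 1$, so "fewer than $u(n,k)$" means the empty partial design, where $L' = K_V$ trivially contains a $k$-star since $n \ge 2k > k$). I would handle these by listing the finitely many $(k,n)$ pairs where the counting inequality is tight or fails and verifying each directly — in every such case $\Lc'$ has so few edges that some vertex still has degree at least $k$ in $L'$. I would also double-check the exact value of $u(n,k)$ in these regimes (using $\lfloor \frac{n-2}{k}\rfloor \ge 1$ from $n \ge 2k$) to make sure "fewer than $u(n,k)$ stars" is a nonvacuous but still very restrictive hypothesis.
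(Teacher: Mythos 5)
Your proposal is correct and takes essentially the same route as the paper: bound $|E(\overline{L'})|\leq k(u(n,k)-1)$, deduce that some vertex of $L'$ has degree at least $k$, and take a star there. The paper just uses the slightly sharper estimate $k(u(n,k)-1)<2n-2k$ together with an average-degree computation, which makes the inequality close uniformly for all $n\geq 2k$ and avoids the separate small-case checks you flag (those checks do go through, so this is only a matter of tidiness).
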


\begin{proof}
Note that $k(u(n,k)-1) < 2n-2k$ and hence $|E(L')|>\binom{n}{2}-ku(n,k)>\frac{1}{2}n(n-5)+2k$. Hence there is a vertex $y$ of $L'$ such that
\[\deg_{L'}(y) > n-5+\mfrac{4k}{n} \geq 2k-3 \geq k-1\]
where the second inequality follows because $n \geq 2k$. So, since $\deg_{L'}(y)$ is an integer, it is at least $k$.
\end{proof}

Given any partial $k$-star design $(V,\A')$ of order $n$ with fewer than $u(n,k)$ stars, we can use Lemma~\ref{L:addStar} to add stars to produce a partial $k$-star design $(V,\A)$ with exactly $u(n,k)$ stars. Any completion of $(V,\A)$ is also a completion of $(V,\A')$. Thus, to prove Theorem~\ref{T: main theorem} it in fact suffices to find completions for just those partial $k$-star designs of $k$-admissible orders $n \geq 2k$ that have exactly $u(n,k)$ stars.

We will now dispense with the case of partial $2$-star decompositions. The graphs admitting a $2$-star decomposition have been completely characterised.

\begin{theorem} [\cite{CaroSch1980}] \label{T: 2-star decomposition}
A $2$-star decomposition of a connected graph $G$ exists if and only if $|E(G)| \equiv 0 \mod{2}$.
\end{theorem}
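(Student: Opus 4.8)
The forward implication is immediate: every $2$-star has exactly two edges, so any graph admitting a $2$-star decomposition has an even number of edges. For the converse, the plan is to prove the slightly stronger statement that a graph $G$ has a $2$-star decomposition whenever every connected component of $G$ has an even number of edges; specialising to a connected graph with an even number of edges then gives the theorem. We proceed by induction on $|E(G)|$, the case $|E(G)|=0$ being trivial. After passing to an individual component, it suffices to handle a connected graph $G$ with $|E(G)| \geq 2$ even, and the heart of the matter is to produce a single $2$-star $H$ that is a subgraph of $G$ and whose deletion leaves every component of $G-E(H)$ with an even number of edges; the inductive hypothesis, applied component by component, then finishes the argument.

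To locate such an $H$, fix a spanning tree $T$ of $G$, root it, and let $\ell$ be a leaf of $T$ of maximum depth, with parent $u$ in $T$. The key structural fact is that deleting a leaf of a spanning tree never disconnects $G$, so $G-\ell$ is connected. If $\deg_G(\ell) \geq 2$, choose a neighbour $a \neq u$ of $\ell$ and let $H$ be the $2$-star with edges $\ell u$ and $\ell a$. Then $G-E(H)$ is either connected (when $\deg_G(\ell) \geq 3$, since $\ell$ retains at least one edge to the connected graph $G-\ell$) or is the disjoint union of the connected graph $G-\ell$ with the isolated vertex $\ell$ (when $\deg_G(\ell)=2$); in both cases every component has $|E(G)|$ or $|E(G)|-2$ edges, hence an even number.

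It remains to treat $\deg_G(\ell)=1$, so that $\ell u$ is the only edge at $\ell$. If $u$ has a second child $\ell'$ in $T$, then $\ell'$ is also a leaf of maximum depth, and we take $H$ to consist of $\ell u$ and $\ell' u$ if $\deg_G(\ell')=1$, and otherwise reduce to the previous case applied to $\ell'$. If instead $\ell$ is the unique child of $u$, then $|E(G)| \geq 2$ forces $u$ to have a parent $p$ in $T$ and $\deg_G(u) \geq 2$; we take $H$ to be the $2$-star with edges $u\ell$ and $up$ when $\deg_G(u)=2$ (leaving the connected graph $G-\ell-u$ and two isolated vertices) and the $2$-star with edges $u\ell$ and $ub$ for some neighbour $b \notin \{\ell,p\}$ of $u$ when $\deg_G(u)\geq 3$ (here $ub$ is not an edge of the spanning tree $T-\ell$ of $G-\ell$, so $(G-\ell)-ub$ is connected, and $G-E(H)$ is this graph together with the isolated vertex $\ell$). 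In every case $G-E(H)$ has all components of even size, completing the induction. The only delicate point is the bookkeeping in this case analysis, and in particular verifying connectivity of the leftover after removing the chosen $2$-star; starting from a deepest leaf of a spanning tree is what makes this manageable, while the one genuinely new phenomenon is the need to ``push the $2$-star up'' to the parent $u$ when $\ell$ has degree one.
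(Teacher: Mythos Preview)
The paper does not prove this statement: it quotes it as a known result of Caro and Sch\"onheim \cite{CaroSch1980} and uses it as a black box in the proof of Lemma~\ref{L: k=2 case}. So there is no ``paper's proof'' to compare against.

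Your argument is correct. The forward direction is immediate, and for the converse your induction is sound: rooting a spanning tree and working at a deepest leaf $\ell$ guarantees that $G-\ell$ (and, when needed, $G-\ell-\ell'$ or $G-\ell-u$) is connected because the corresponding subtree of $T$ still spans it. Each of your choices of $H$ removes two edges and leaves at most one nontrivial component plus some isolated vertices, so the parity condition is preserved and the induction goes through. The case analysis is complete and the connectivity checks are all valid; in particular, in the final subcase $\deg_G(u)\geq 3$ your observation that $ub$ is a non-tree edge (since $u$'s only $T$-neighbours are $\ell$ and $p$) is exactly what is needed to keep $(G-\ell)-ub$ connected.

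A minor remark: one can shorten the argument by noting that a connected graph with at least two edges always contains a $2$-star whose removal leaves a graph with at most one nontrivial component (take two edges sharing an endpoint of a longest path, or two edges incident with a non-cut-vertex). But your spanning-tree bookkeeping is perfectly fine and arguably more transparent.
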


The following lemma completes the proof of Theorem~\ref{T: main theorem} for $k=2$. Note that $u(n,2)=n-3$ for all integers $n>1$.

\begin{lemma} \label{L: k=2 case}
For all $2$-admissible integers $n > 1$, any partial $2$-star design of order $n$ with
$n-3$ stars is completable.
\end{lemma}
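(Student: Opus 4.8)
The plan is to use Theorem~\ref{T: 2-star decomposition}, which says a connected graph has a $2$-star decomposition precisely when it has an even number of edges. So it suffices to show that for any partial $2$-star design $(V,\A)$ of order $n$ with $n-3$ stars, we can augment $\A$ to a partial $2$-star design whose leftover $L$ is connected (the leftover automatically has an even number of edges, since $\binom{n}{2}$ is even by $2$-admissibility and each star removes $2$ edges). Write $\Lc$ for the graph of edges already used; it has $2(n-3)=2n-6$ edges, so $L$ has $\binom{n}{2}-(2n-6)$ edges, which is comfortably more than $\binom{n-1}{2}$ for $n$ reasonably large, forcing $L$ to be connected on its own for large $n$. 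The work is therefore concentrated in a bounded range of small $n$, where I would argue directly.

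The main step is the following: if $L$ is disconnected, I want to find a $2$-star in $L$ whose removal reduces the number of components, or more precisely to add $2$-stars to $\A$ one at a time, each time either keeping the edge count even and the leftover connected, or making progress toward connectivity. A cleaner route: first use Lemma~\ref{L:addStar} repeatedly is not available here since we are already at $u(n,2)=n-3$ stars, so instead I would argue about $\Lc$ directly. Since $\Lc$ has only $2n-6$ edges on $n$ vertices, $\Lc$ has at least $n - (2n-6) = 6-n$ ... this is vacuous, so I instead count components of $L$: each component of $L$ with $t$ vertices and no $L$-edges leaving it means all $\binom{t}{2}$-type edges among those $t$ vertices ... actually the key observation is that $\Lc$ must be ``spread out.'' If $L$ has components $V_1,\dots,V_r$, then $\Lc$ contains the complete multipartite graph with parts $V_1,\dots,V_r$, which has at least $\binom{r}{2}$ edges when each $|V_i|\geq 1$, and in fact at least $(r-1)(n-r+1) \geq \tfrac12 r(n-r)$-ish many; combined with $|E(\Lc)| = 2n-6$ this bounds $r$ and the sizes of small components, reducing to finitely many configurations.

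The cleanest argument I expect to work: show directly that we can choose the $n-3$ stars' worth of augmentation so that $L$ is connected. Take any spanning connected subgraph structure — pick a vertex $v$ of maximum degree in $L$; if $\deg_L(v) \geq n-1$ then $v$ is adjacent in $L$ to everything and $L$ is connected, done. Otherwise every vertex has $\Lc$-degree $\geq 1$, and since $|E(\Lc)|=2n-6 < n-1$ is false in general, I instead observe $\Lc$ has average degree $\tfrac{2(2n-6)}{n} < 4$, so most vertices have small $\Lc$-degree, hence large $L$-degree, hence $L$ has a spanning subgraph that is easily seen connected by a standard degree-sum argument (e.g. a graph on $n$ vertices in which all but at most a few vertices have degree $\geq n/2$ is connected). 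The boundary cases of small $n$ (roughly $n \leq 8$) I would handle by an explicit check, exploiting that $2$-admissible $n$ are those with $n \equiv 0$ or $1 \pmod 4$.

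The main obstacle will be the small-$n$ cases and, more subtly, ensuring that after we identify that $L$ ``ought'' to be connected we have not been forced into a leftover that is connected but where some component has odd size — but this cannot happen once $L$ is connected since then it has exactly $\binom{n}{2} - (2n-6)$ edges which is even. So the real content is purely the connectivity of the leftover, and I expect the whole lemma to follow from a short counting argument plus Theorem~\ref{T: 2-star decomposition}, with a finite check for small orders.
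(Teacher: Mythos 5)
Your overall plan --- reduce to Theorem~\ref{T: 2-star decomposition} via a parity observation plus a structural claim about the leftover $L$ --- is the same as the paper's, and the parity part is fine. But there are two genuine problems. First, your target statement is wrong: $L$ need not be connected. For every $n\geq 5$ one can take two of the $n-3$ stars centred at the same vertex $v$ with disjoint leaf sets (or more generally arrange $\deg_{\Lc}(v)=n-1$, which is possible whenever $2(n-3)\geq n-1$), making $v$ isolated in $L$. The correct claim, which is what the paper proves, is that $L$ has at most one \emph{nontrivial} component; isolated vertices are harmless since the theorem is then applied to the unique nontrivial component, whose edge count is $|E(L)|$ and hence even. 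Relatedly, your assertion that $|E(L)|=\binom{n}{2}-(2n-6)$ is ``comfortably more than $\binom{n-1}{2}$ for $n$ reasonably large'' has the inequality backwards: it holds if and only if $2n-6<n-1$, i.e.\ only for $n=4$, so the ``connected for large $n$, finite check for small $n$'' split does not exist.

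Second, even granting the corrected target, you never close the counting argument: you gesture at ``$\Lc$ contains a complete multipartite graph with at least $\binom{r}{2}$ edges,'' at average-degree bounds, and at ``finitely many configurations,'' but none of these yields a contradiction (with $r$ components, many of them singletons, $\binom{r}{2}$ can be far below $2n-6$). The one-line argument you are missing is: if $L$ had two nontrivial components, let $w$ be the order of the smallest, so $2\leq w\leq \frac{n}{2}$; then every edge of $K_V$ between that component and the other $n-w$ vertices lies in $\Lc$, giving $|E(\Lc)|\geq w(n-w)\geq 2(n-2)>2(n-3)=|E(\Lc)|$, a contradiction. Note also that the stars of $\A$ are given, not chosen, so the passage where you propose to ``choose the $n-3$ stars' worth of augmentation'' is not available.
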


\begin{proof}
Note that $n \geq 4$ since $n$ is $2$-admissible. Let $(V,\A)$ be a partial $2$-star design of order $n$ such that $|\A| =  n-3$ and let $L$ be the leftover of $(V,\A)$. Since $n$ is $2$-admissible, $|E(L)| \equiv 0 \mod{2}$. We will show that $L$ has only one nontrivial component and hence complete the proof by Theorem~\ref{T: 2-star decomposition}. Suppose for a contradiction that $L$ has at least two nontrivial components and let $w$ be the number of vertices in a smallest nontrivial component. Then $2 \leq w \leq \frac{n}{2}$ and $|E(\Lc)| \geq w(n-w) \geq 2(n-2)$. This contradicts the fact that $|E(\Lc)|=2(n-3)$ since $|\A| =  n-3$.
\end{proof}

So, for the remainder of the paper we will concentrate on completing partial $k$-star designs where $k \geq 3$. Throughout we will often write a $k$-admissible integer $n$ as $ak+b$ where $a$ is a nonnegative integer and $b \in \{1,\ldots,k\}$. For each $k \geq 3$ we have that $ak+2$ is not $k$-admissible, and hence that $b \neq 2$. We use this fact tacitly in the remainder of the paper. When $n$ is written in this fashion we have the following equivalent definition of $u(n,k)$, which is often more convenient.
\begin{equation}\label{E:uDefab}
    u(ak+b,k)=
\begin{cases}
2a-2 &\text{if $b=1$}\\
2a-1 &\text{if $b \in \{3,\ldots,k\}$.}
\end{cases}
\end{equation}

We will take advantage of one further reduction of the problem. Call a partial $k$-star design $(V,\A)$ of order $n$ \emph{reducible} if $n \equiv 1 \mod{k}$, $|\A|=u(n,k)$, and there is a vertex which is the centre of at least one star in $\A$ and is not a leaf of any star in $\A$. The following lemma shows that it suffices to find completions only for non-reducible partial $k$-star designs.

\begin{lemma}\label{L:reduction}
Let $k \geq 2$ be an integer. Suppose that, for each $k$-admissible integer $n \geq 2k$, every non-reducible partial $k$-star design of order $n$ with at most $u(n,k)$ stars is completable. Then, for each $k$-admissible integer $n \geq 2k$, every partial $k$-star design of order $n$ with at most $u(n,k)$ stars is completable.
\end{lemma}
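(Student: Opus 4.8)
The plan is to reduce any \emph{reducible} partial $k$-star design to a non-reducible one of smaller order, by deleting a single vertex together with all stars centred at it, and then to argue that a completion of the smaller design lifts back. Observe first that a partial $k$-star design with strictly fewer than $u(n,k)$ stars is automatically non-reducible, and so is already handled by the hypothesis; hence it suffices to complete an arbitrary reducible partial $k$-star design $(V,\A)$ of order $n$ with exactly $u(n,k)$ stars, where $n$ is $k$-admissible and $n \geq 2k$. By the definition of reducibility, $n \equiv 1 \mod{k}$; write $n = ak+1$, and note that $a \geq 2$ since $n \geq 2k$, so that $u(n,k) = 2a-2$ by \eqref{E:uDefab}. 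Also by reducibility, fix a vertex $x$ that is the centre of $c \geq 1$ stars of $\A$ but is a leaf of no star of $\A$.

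Since $x$ is a leaf of no star, every star of $\A$ whose centre is not $x$ has $x$ outside its vertex set. So, writing $\A''$ for the set of stars of $\A$ not centred at $x$, the pair $(V \setminus \{x\}, \A'')$ is a partial $k$-star design of order $ak$ with exactly $2a-2-c$ stars. I would then check that the hypothesis applies to it: the order $ak$ is $k$-admissible, because $n$ is $k$-admissible gives $\binom{ak+1}{2} \equiv 0 \mod{k}$, and $\binom{ak+1}{2} - \binom{ak}{2} = ak \equiv 0 \mod{k}$, so together $\binom{ak}{2} \equiv 0 \mod{k}$; we have $ak \geq 2k$; the number of stars satisfies $2a-2-c \leq 2a-3 = u(ak,k)$ (by \eqref{E:uDefab} with remainder $k$, since $c \geq 1$); and $(V\setminus\{x\},\A'')$ is non-reducible because $ak \not\equiv 1 \mod{k}$. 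The hypothesis therefore supplies a $k$-star design $(V\setminus\{x\},\B'')$ of $K_{V\setminus\{x\}}$ with $\A'' \subseteq \B''$.

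Finally I would reattach $x$. The crucial consequence of the condition that $x$ is a leaf of no star is that the edges incident to $x$ lying in stars of $\A$ are exactly the $kc$ edges of the $c$ stars of $\A$ centred at $x$; hence the leftover $L$ of $(V,\A)$ satisfies $\deg_L(x) = (n-1) - kc = k(a-c)$, which is a nonnegative multiple of $k$ (nonnegative because $kc \leq n-1 = ak$). I would partition these $k(a-c)$ leftover edges at $x$ into $a-c$ stars centred at $x$, and let $\B$ be the union of $\B''$, the $c$ stars of $\A$ centred at $x$, and these $a-c$ new stars. The $a$ stars of $\B$ centred at $x$ are pairwise edge-disjoint and cover each edge of $K_V$ incident to $x$ exactly once; no star centred at $x$ shares an edge with a star of $\B''$, since the stars of $\B''$ avoid $x$ entirely; and $\B''$ partitions the edges of $K_{V\setminus\{x\}}$. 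Together these facts show that $\B$ is a set of pairwise edge-disjoint $k$-stars whose edge set is exactly $E(K_V)$. Since $\A$ is the union of $\A''$ and the $c$ stars of $\A$ centred at $x$, both of which are subsets of $\B$, the $k$-star design $(V,\B)$ contains $\A$ and is the required completion.

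I do not expect a genuine obstacle here: this is a single-step reduction rather than an induction. The only slightly delicate points are the arithmetic showing that $(V\setminus\{x\},\A'')$ is a legitimate instance of the hypothesis --- the $k$-admissibility of $ak$ and the bound $2a-2-c \leq u(ak,k)$ --- and the observation that reducibility forces $\deg_L(x)$ to be divisible by $k$, which is exactly what lets the leftover edges at $x$ be grouped into complete $k$-stars.
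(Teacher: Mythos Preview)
Your proposal is correct and follows essentially the same approach as the paper's proof: delete the distinguished vertex $x$ together with all stars centred at it, apply the hypothesis to the resulting partial design of order $ak$, and then reattach $x$ by packing the remaining $k(a-c)$ leftover edges at $x$ into stars. Your write-up is in fact more explicit than the paper's on the verification details (the $k$-admissibility of $ak$, the star count bound, and the check that the lifted family partitions $E(K_V)$), but the argument is the same.
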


\begin{proof}
Let $(V,\A)$ be a reducible partial $k$-star design and say $|V|=ak+1$ where $a \geq 2$ is an integer. Then $\A$ has $u(ak+1,k)=2a-2$ stars. Let $L$ be the leftover of $(V,\A)$ and let $x$ be a vertex which is the centre of at least one star in $\A$ and is not a leaf of any star in $\A$. Then $\deg_L(x)\equiv0 \mod{k}$. Let $\A_x$ be the set of stars in $\A$ that are centred at $x$ and let $\A_x'$ be a set of $\frac{1}{k}\deg_L(x)$ edge-disjoint stars in $L$, all centred on $x$.  Then $(V \setminus \{x\},\A \setminus \A_x)$ is a partial $k$-star design of order $ak$ with at most $u(ak,k)=2a-3$ stars. Now $(V \setminus \{x\},\A \setminus \A_x)$ is non-reducible since its order is $ak$ and so, by our hypotheses, it has a completion $(V \setminus \{x\},\A')$. Then $(V,\A' \cup \A_x \cup \A_x')$ is a completion of $(V,\A)$.
\end{proof}

Throughout the paper, for any function $f$ that assigns real values to vertices in a set $V$ and for any subset $T$ of $V$, we abbreviate $\sum_{x \in T}f(x)$ to $f(T)$. Let $k \geq 2$ be an integer. For any graph $G$ such that $|E(G)| \equiv 0 \mod{k}$ we define a \emph{$k$-precentral function} for $G$ to be a function $p: V(G) \rightarrow \mathbb{Z}^{\geq 0}$ such that $p(V(G))=\frac{1}{k}|E(G)|$. We say a $k$-precentral function for $G$ is \emph{realisable} if it is the central function of some $k$-star decomposition $\D$ of $G$.

Our approach to finding the completions necessary to prove Theorem~\ref{T: main theorem} is based on a key fact which we state as Lemma~\ref{L:realisation} below. This fact is an easy consequence of a result of Tarsi  \cite[Theorem~2]{Tarsi1981} and can also be obtained as a specialisation of results in \cite{Hoffman2004} or \cite{CameronHorsley2020} concerning star decompositions of multigraphs.

\begin{lemma}\label{L:realisation}
Let $p$ be a $k$-precentral function for a graph $G$. Then $p$ is realisable if and only if, for every nonempty proper subset $T$ of $V(G)$, we have $\Delta^+_T \geq \Delta^-_T$ where $\Delta^+_T$ is the number of edges of $G$ that are incident with at least one vertex in $T$ and $\Delta^-_T=kp(T)$.
\end{lemma}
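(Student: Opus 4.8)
The plan is to reformulate realisability of $p$ as the existence of a suitable orientation of $G$, and then appeal to a classical degree-constrained orientation result (equivalently, to Tarsi~\cite[Theorem~2]{Tarsi1981}). The one genuine idea is this: in a $k$-star decomposition of $G$ the only real constraint is edge-disjointness, since a vertex may be a leaf of arbitrarily many stars. Consequently, if $G$ has an orientation in which every vertex $x$ has in-degree exactly $kp(x)$, then we may partition the $kp(x)$ in-edges at $x$ into $p(x)$ groups of size $k$; because $G$ is simple, the $k$ edges in such a group run to $k$ distinct leaves, so each group is a $k$-star centred at $x$. Over all vertices $x$ these stars are edge-disjoint (each edge lies in exactly one group, namely that at its head) and together cover $E(G)$, so they form a $k$-star decomposition with central function $p$. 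Conversely, any $k$-star decomposition with central function $p$ gives such an orientation by directing each edge towards the centre of the star containing it. Hence $p$ is realisable if and only if $G$ admits an orientation in which each vertex $x$ has in-degree $kp(x)$.

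Next I would invoke the standard characterisation of graphs with a prescribed in-degree orientation: for $d \colon V(G) \to \mathbb{Z}^{\geq 0}$, an orientation of $G$ with $\deg^-(x) = d(x)$ for all $x$ exists if and only if $d(V(G)) = |E(G)|$ and $|E(G[S])| \leq d(S)$ for every $S \subseteq V(G)$. Necessity is immediate, since every edge of $G[S]$ must be directed towards a vertex of $S$. For sufficiency one applies Hall's theorem to the bipartite graph between $E(G)$ and a set of $d(x)$ ``tokens'' at each vertex $x$, in which an edge of $G$ is joined to the tokens at both of its endpoints: the Hall condition for a matching saturating $E(G)$, applied to a set $A$ of edges with vertex set $W$, says $|A| \leq |E(G[W])| \leq d(W)$, which is exactly the stated inequality for $S = W$. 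A matching saturating $E(G)$ then assigns each edge to a vertex, and a counting argument using $d(V(G)) = |E(G)|$ forces each vertex $x$ to receive exactly $d(x)$ edges. (Alternatively, this is a special case of Tarsi~\cite[Theorem~2]{Tarsi1981} or of the multigraph star-decomposition results of \cite{Hoffman2004, CameronHorsley2020}.)

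It remains to check that, with $d = kp$, these conditions are exactly the hypothesis of the lemma. Since $p$ is $k$-precentral we have $d(V(G)) = kp(V(G)) = |E(G)|$, so the case $S = V(G)$ holds with equality and $S = \emptyset$ is trivial. For a nonempty proper subset $S$, set $T = V(G) \setminus S$, which is also nonempty and proper; the edges of $G$ not incident with any vertex of $T$ are precisely the edges of $G[S]$, so $\Delta^+_T = |E(G)| - |E(G[S])|$, while $\Delta^-_T = kp(T) = |E(G)| - kp(S)$. Thus $\Delta^+_T \geq \Delta^-_T$ is equivalent to $|E(G[S])| \leq kp(S) = d(S)$, and as $T$ runs over all nonempty proper subsets of $V(G)$ so does $S$. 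Combining the three paragraphs gives the lemma. The only things requiring care are the verification that groups of $k$ in-edges really are stars (here simplicity of $G$ is used) and the routine translation between the $\Delta$-inequalities and the $E(G[S])$-inequalities; there is no substantial obstacle, which is why the result is ``easy''.
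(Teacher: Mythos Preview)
Your argument is correct and is exactly the intended route: the paper does not give a proof but simply remarks that the lemma is an easy consequence of Tarsi's orientation-style result \cite[Theorem~2]{Tarsi1981} (or of \cite{Hoffman2004,CameronHorsley2020}), and your reduction to a prescribed in-degree orientation, together with the Hall-based verification and the complementation $S\leftrightarrow T$, is precisely an unpacking of that citation. The only point worth flagging is that your use of simplicity of $G$ is essential for the groups of $k$ in-edges to be genuine $K_{1,k}$'s, which you have noted.
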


We use the notation $\Delta^+_T$ and $\Delta^-_T$ extensively in the remainder of the paper. We also define $\Delta_T=\Delta^+_T-\Delta^-_T$ so that $\Delta_T$ is nonnegative if and only if $\Delta^+_T \geq \Delta^-_T$. Note that all of this notation is implicitly dependent on a $k$-precentral function $p$ that will always be clear from context. Through the rest of the paper our approach to showing that a partial $k$-star design $(V,\A)$ has a completion will be to define a $k$-precentral function $p$ for its leftover $L$ and then use Lemma~\ref{L:realisation} to show that $p$ is realisable.

\section{Small orders}\label{S:smallOrders}

In this section we complete the proof of Theorem~\ref{T: main theorem} for $n \in \{2k,\ldots,3k+1\}$. Lemmas~\ref{L:2k+1}, \ref{L:3k} and \ref{L:3k+1} below will, respectively, handle the cases where $n \in \{2k,2k+1\}$, where $2k+1<n \leq 3k$, and where $n=3k+1$.

\begin{lemma}\label{L:2k+1}
Let $k \geq 3$ be an integer and let $n$ be a $k$-admissible integer such that $n \in \{2k,2k+1\}$. Any non-reducible partial $k$-star design of order $n$ with $u(n,k)$ stars is completable.
\end{lemma}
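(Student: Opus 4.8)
The plan is to use Lemma~\ref{L:reduction}'s reduction implicitly by focusing on the two orders $n=2k$ and $n=2k+1$ directly, and in each case to define a suitable $k$-precentral function $p$ for the leftover $L$ and verify the hypothesis of Lemma~\ref{L:realisation}. First I would record the basic data: when $n=2k$ (so $b=k$, $a=2$), we have $u(n,k)=2a-1=3$, and when $n=2k+1$ (so $b=1$, $a=2$), we have $u(n,k)=2a-2=2$; in particular $|E(\Lc)|=ku(n,k)$ is small — at most $3k$ — compared with $|E(K_V)|=\binom{n}{2}$, which is roughly $2k^2$. Consequently $L$ is very dense: every vertex has leftover-degree at least $n-1-\deg_{\Lc}(v)$, and since $\deg_{\Lc}(v)$ is bounded by the total edge count divided by nothing dramatic, most vertices have leftover degree close to $n-1 \geq 2k-1$. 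The key structural point to extract is that $L$ has a large minimum degree except possibly at a few ``bad'' vertices that absorb many edges of $\Lc$.

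Next I would choose the precentral function $p$ greedily so as to avoid the obstruction that made Lemma~\ref{L:tightness} work: never assign positive $p$-value to a vertex whose leftover degree is too small to host that many star-centres, and more importantly never force $\Delta^-_T = kp(T)$ to exceed $\Delta^+_T$ for a small set $T$. Concretely, I would try to place all $\frac1k|E(L)|$ centres on a single vertex $w$ of maximum leftover degree, or spread them over a small set of high-degree vertices, making sure $kp(v) \leq \deg_L(v)$ for each $v$ (this handles singletons $T=\{v\}$) and that for the support $S$ of $p$ the quantity $\Delta^+_S$, which counts all leftover edges meeting $S$, comfortably exceeds $kp(S) = |E(L)|$. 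Because $|E(L)|$ is enormous relative to $ku(n,k)$ while any small set of high-degree vertices already touches nearly all of $E(L)$, this should be arrangeable; the verification of Lemma~\ref{L:realisation} then amounts to checking, for an arbitrary nonempty proper $T$, that $\Delta^+_T \geq kp(T)$, which reduces to checking it for $T$ contained in the support $S$ of $p$ (since enlarging $T$ outside $S$ only increases $\Delta^+_T$ without changing $\Delta^-_T$) — and $|S|$ is a bounded constant, at most $u(n,k) \leq 3$.

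The main obstacle I anticipate is not any single inequality but the bookkeeping of exactly which vertices can receive centres: the adversary controls $\A$, hence controls which vertices are already saturated (have small leftover degree) or are leaves of existing stars, and a careless choice of $p$ could recreate precisely the uncompletable configuration of Lemma~\ref{L:tightness}. So the delicate step is arguing that, after the adversary has placed $u(n,k)$ stars, there still exists a vertex (or a pair of vertices) with leftover degree at least $k \cdot \frac1k|E(L)|$-worth of room — equivalently, that the ``deficiency'' $\sum_v \max(0, kp(v) - \text{something})$ can be driven to zero. Since $u(n,k)$ is only $2$ or $3$, I would handle this by a short case analysis on how the at most $3$ stars of $\A$ overlap (how many distinct centres, whether centres coincide with leaves, etc.), in each case exhibiting an explicit $p$ supported on one or two vertices and checking the at most a handful of relevant sets $T$. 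I expect each case to reduce to an inequality of the shape $n-1-(\text{small constant}) \geq k$ or $2(n-2) - (\text{small constant}) \geq 2 \cdot \frac1k|E(L)|$, both of which hold with room to spare for $n \geq 2k$ and $k \geq 3$.
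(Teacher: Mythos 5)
There is a genuine gap here, and it is structural rather than a matter of unfinished detail. Your plan rests on the claim that the $k$-precentral function $p$ can be supported on a set $S$ of bounded size, ``at most $u(n,k)\leq 3$'', so that only a handful of sets $T$ need checking in Lemma~\ref{L:realisation}. This is impossible. First, $u(2k,k)=2\lfloor\frac{2k-2}{k}\rfloor-1=1$, not $3$ (writing $2k=ak+b$ with $b\in\{1,\ldots,k\}$ forces $a=1$, $b=k$). More importantly, for $n=2k$ we have $|E(L)|=\binom{2k}{2}-k=2k(k-1)$, so $p(V)=\frac{1}{k}|E(L)|=2k-2$; but every vertex satisfies $\deg_L(v)\leq 2k-1<2k$, and the singleton condition $\Delta^+_{\{v\}}=\deg_L(v)\geq kp(v)$ then forces $p(v)\leq 1$ for every $v$. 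Hence the support of any realisable (indeed any admissible) $p$ must contain at least $2k-2$ of the $2k$ vertices; similarly for $n=2k+1$ one gets $p(V)=2k-1$ with $p(v)\leq 2$, so the support has at least $k$ vertices. The number of centres still to be placed grows linearly in $k$ while each vertex can host at most one or two of them, so they cannot be concentrated on one or two high-degree vertices, and the reduction to checking $T\subseteq S$ for a constant-size $S$ collapses. A correct argument along your lines would have to take $p$ equal to $1$ on all but two vertices of $K_{2k}$ and verify $\Delta_T\geq 0$ for general $T$; this is delicate, e.g.\ for $T=V\setminus\{x,z\}$ with $p(x)=p(z)=0$ one gets $\Delta_T=2k(k-1)-|E(L[\{x,z\}])|-2k(k-1)$, which is negative unless $xz\notin E(L)$, so the two zero-vertices must be chosen adjacent in $\overline{L}$.

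You also miss the observation that makes this lemma essentially trivial and which the paper exploits. For $n=2k+1$ we have $u=2$, and a partial design with two stars is always reducible: if both stars share a centre, that centre is a leaf of no star; if the centres $x_1,x_2$ are distinct, then non-reducibility would force $x_1$ to be a leaf of the star at $x_2$ and vice versa, putting the edge $x_1x_2$ in both stars, contradicting edge-disjointness. So the non-reducible case is only $n=2k$ with a single star, and that is completed directly by taking any $k$-star design of order $2k$ (which exists by Theorem~\ref{T:designExistence}) and relabelling its vertices so that one of its stars coincides with the given one. No precentral-function machinery is needed for this lemma.
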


\begin{proof}
Let $(V,\A)$ be a partial $k$-star design of order $n$ with $u(n,k)$ stars. If $n=2k+1$, then $|\A|=u(2k+1,k)=2$ and it follows that $(V,\A)$ must be reducible. So, in fact, $n=2k$ and $|\A|=u(2k,k)=1$. By Theorem~\ref{T:designExistence}, there is a $k$-star design of order $n$. By relabelling the vertices of such a design, we can obtain a $k$-star design $(V,\B)$ such that $\A \subseteq \B$.
\end{proof}

\begin{lemma}\label{L:3k}
Let $k \geq 3$ be an integer and let $n$ be a $k$-admissible integer such that $2k+1 < n \leq 3k$. Any partial $k$-star design of order $n$ with three stars is completable.
\end{lemma}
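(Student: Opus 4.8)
Throughout write $n=2k+b$ with $b\in\{3,\ldots,k\}$ (recall $b\neq 2$); then $u(n,k)=2a-1=3$ by~\eqref{E:uDefab}, so a partial $k$-star design of order $n$ with three stars has exactly $u(n,k)$ stars. Let $(V,\A)$ be such a design with leftover $L$, so $\overline{L}=\Lc$ is the edge-disjoint union of the three $k$-stars of $\A$; thus $|E(\overline{L})|=3k$, and since a single $k$-star already spans $k+1$ vertices, $\overline{L}$ meets at least $k+1$ vertices of $V$. Our plan is to apply Lemma~\ref{L:realisation}. Replacing each set $T$ in that lemma by its complement $S=V\setminus T$ and using $kp(V)=|E(L)|$ and $\Delta^+_T=|E(L)|-|E(L[S])|$, the realisability condition becomes: $kp(S)\geq |E(L[S])|$ for every nonempty proper $S\subseteq V$. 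So it suffices to construct a $k$-precentral function $p\colon V\to\Z^{\geq 0}$ for $L$ (that is, with $p(V)=\tfrac1k|E(L)|=\tfrac1{2k}\sum_{v}\deg_L(v)$) satisfying this inequality.

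The function we would use is an integer approximation of $v\mapsto \deg_L(v)/(2k)$. The point of this choice is that the target value $p(V)$ is \emph{exactly} $\sum_v\deg_L(v)/(2k)$, and that the (non-integer) assignment $p(v)=\deg_L(v)/(2k)$ already satisfies every required inequality, since then $kp(S)=\tfrac12\sum_{v\in S}\deg_L(v)\geq |E(L[S])|$, the surplus being half the number of $L$-edges leaving $S$. To make $p$ integral: because $\overline{L}$ has only $3k$ edges while $n\leq 3k$, all but a few vertices satisfy $\deg_L(v)=n-1\in[2k,3k)$, and hence $\lfloor\deg_L(v)/k\rfloor=2$; we would set $p(v)=\lfloor\deg_L(v)/(2k)\rfloor\in\{0,1\}$ and then restore the deficit $p(V)-\sum_v\lfloor\deg_L(v)/(2k)\rfloor$ by incrementing $p$ at carefully chosen vertices with $\deg_L(v)\geq 2k$, using $\sum_v\deg_{\overline{L}}(v)=6k$ to guarantee that enough such vertices exist, that $p(v)\leq\lfloor\deg_L(v)/k\rfloor$ continues to hold everywhere, and that no two incremented vertices of $L$-degree exactly $2k$ are adjacent in $L$.

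It then remains to verify $kp(S)\geq|E(L[S])|$ for all nonempty proper $S$, which I would organise by the size of $S$. When $|S|\leq 2k+1$ and $p(v)\geq 1$ for all $v\in S$, this is immediate from $|E(L[S])|\leq\binom{|S|}{2}\leq k|S|\leq kp(S)$; the only vertices with $p(v)=0$ are those of small $L$-degree (hence large $\overline{L}$-degree, of which there are at most $6k/b$), and for subsets meeting these one argues directly that $|E(\overline L[S])|$ is already large enough. When $|S|$ is close to $n$, pass back to $T=V\setminus S$ and bound $\Delta^+_T\geq\sum_{v\in T}\deg_L(v)-|E(L[T])|$ from below directly, using $kp(v)\leq\deg_L(v)$ together with the constraints imposed on the incremented vertices. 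The main obstacle is the intermediate range: subsets $S$ of moderate size that are largely supported on $V(\overline{L})$, where $L[S]$ can be almost complete while $p(S)$ is only about $|S|$. Controlling these genuinely requires the structure of $\overline{L}$ as a union of three $k$-stars — for instance, the three stars cannot all lie within fewer than $k+1$ vertices, so $\overline{L}[S]$ must retain enough edges, i.e.\ $L[S]$ must be missing enough edges. I expect this to be handled by a short case split on the number of distinct centres of the three stars of $\A$ and on the sizes of the pairwise intersections of their leaf sets, perhaps after treating separately the few small values of $k$ for which a $k$-admissible $n$ exists in $(2k+1,3k]$ (such as $k=3$, $n=9$).
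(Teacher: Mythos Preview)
Your framework is correct: reformulating Lemma~\ref{L:realisation} as $kp(S)\geq|E(L[S])|$ for all nonempty proper $S$, and aiming for a precentral function close to $v\mapsto\deg_L(v)/(2k)$, is exactly the paper's strategy both here and (more systematically) for $n>3k+1$ in Sections~\ref{S:suitable}--\ref{S:realising}. But what you have written is an outline, not a proof. The phrases ``I would set'', ``I would organise'', and ``I expect this to be handled'' mark precisely the places where the actual content lies, and that content is absent. Specifically: you do not verify that enough vertices with $\deg_L(v)\geq 2k$ exist to absorb the deficit; you do not show your non-adjacency constraint on incremented degree-$2k$ vertices can always be met; and that constraint is in any case not sufficient, since four incremented vertices of $L$-degree $2k+1$ spanning a $K_4$ in $L$ already give $\Delta_T=-2$. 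The ``intermediate range'' you flag as the main obstacle is indeed where the work is, and you have not done it.

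The paper's proof specifies $p$ explicitly rather than generically: $p(x)=2-c(x)$ on the set $C$ of star centres, $p=2$ on a set $S\subseteq V\setminus C$ of the $b+h+\tfrac{b(b-1)}{2k}-4$ highest-$L$-degree non-centres (where $h=1$ if $|C|=2$ and $h=0$ if $|C|=3$), and $p=1$ elsewhere. This is in fact a proportional function in the paper's later sense, so your instinct is right; the gain from the explicit description is that the verification becomes tractable. The paper splits on $t=|T|\geq b$ versus $t\leq b-1$; in the second case one gets $\Delta_T\geq\tfrac12 t(2b+2c(T)-7-t)$, whose failure forces $t\geq 2b-6$, hence $b\leq 5$, and then $k$-admissibility pins down $(b,k)\in\{(3,3),(4,6),(5,5),(5,10)\}$. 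These four residual cases are dispatched by a short pigeonhole argument showing $\deg_{\Lc}(x)\leq 1$ for every $x\in S$. Your sketch could plausibly be completed along similar lines, but this explicit reduction and the handling of the four concrete pairs are the substance of the lemma, and they are missing.
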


\begin{proof}
Let $b$ be the integer such that $n=2k+b$ and note that $b \in \{3,\ldots,k\}$ since $n$ is $k$-admissible. Let $(V,\A)$ be a partial $k$-star design of order $n$ with $|\A| = 3$, let $L$ be the leftover of $(V,\A)$, and let $c$ be the central function of $\A$. Let $C=\{x \in V: c(x) \geq 1\}$ and note that $|C|\in \{2,3\}$. Let $h=0$ if $|C|=3$ and $h=1$ if $|C|=2$. Let $S$ be a subset of $V \setminus C$ such that $|S|=b+h+\frac{b(b-1)}{2k}-4$ and $\deg_{L}(y) \geq \deg_{L}(x)$ for each $y \in S$ and $x \in V \setminus (C \cup S)$. Note that $S$ is well-defined because $b+h+\frac{b(b-1)}{2k}-4 \in \{0,\ldots,n-3\}$ since $3 \leq b \leq k$ and $n$ is $k$-admissible. We define a $k$-precentral function $p$ by setting $p(x)=2-c(x)$ for each $x \in C$, $p(x)=2$ for each $x \in S$, and $p(x)=1$ for each $x \in V \setminus (C \cup S)$. Note that $p$ is indeed a $k$-precentral function because $p(C)=|C|-h$, $p(V \setminus C)=2k+b-|C|+|S|$ and hence
\[p(V)=2k+b-h+|S|=\mfrac{1}{k}\mbinom{2k+b}{2}-3=\mfrac{|E(L)|}{k}.\]
Let $T$ be a nonempty proper subset of $V$ and let $t=|T|$. By Lemma~\ref{L:realisation} it suffices to show that $\Delta_T \geq 0$. We consider two cases depending on the value of $t$.\smallskip

\noindent \textbf{Case 1.} Suppose $t \geq b$. Then, since $p(T) \leq t+|S|$, we have
\[
\Delta_T^+ \geq \tbinom{t}{2}+t(2k+b-t)-3k, \qquad \Delta_T^- \leq k(t+|S|) = k(t+b+h-4)+\tbinom{b}{2}
\]
and hence $\Delta_T \geq \tfrac{1}{2}(t+1-b)(2k+b-t)-hk$. Noting that $h \leq 1$ and $b \leq t \leq 2k+b-1$, we have $\Delta_T \geq 0$. \smallskip

\noindent \textbf{Case 2.} Suppose $t \leq b-1$. There are $3-c(T)$ stars in $\A$ centred on vertices in $V \setminus T$ and any such star can have at most $t$ leaves in $T$. Also, for each $x \in T$, we have $p(x) \leq 2-c(x)$. So,
\[
\Delta_T^+ \geq \tbinom{t}{2}+t(2k+b-t)-kc(T)-(3-c(T))t, \qquad \Delta_T^- \leq k(2t-c(T))
\]
and hence
\begin{equation}\label{E:3k}
\Delta_T \geq \tfrac{1}{2}t(2b+2c(T)-7-t).
\end{equation}
We can assume that the right side of \eqref{E:3k} is negative for otherwise $\Delta_T \geq 0$ and we are done. In particular, we have $t \geq 2b-6$ since $c(T) \geq 0$. Since $t \leq b-1$, this implies $b \in \{3,4,5\}$. Because $2k+b$ is $k$-admissible we have $b(b-1) \equiv 0 \mod{2k}$ and hence $(b,k) \in \{(3,3),(4,6),(5,5),(5,10)\}$. The following table details, in each of these subcases, the value of $|S|$ and the possible values of $(c(T),t)$ that make the right side of \eqref{E:3k} negative.

\begin{center}
\begin{tabular}{c|c||c|c}
     $b$ & $k$ & $|S|$ & possible $(c(T),t)$  \\ \hline
     3 & 3 & $h$ & $(0,1)$, $(0,2)$, $(1,2)$  \\
     4 & 6 & $1+h$ & $(0,2)$, $(0,3)$  \\
     5 & 5 & $3+h$ & $(0,4)$  \\
     5 & 10 & $2+h$ & $(0,4)$  \\
\end{tabular}
\end{center}

We claim there must be at least $|S|$ vertices in $V \setminus C$ whose degree in $\Lc$ is at most 1. In each subcase this follows from the pigeonhole principle using $|V \setminus C| = 2k+b-3+h$, $\sum_{x \in V \setminus C}\deg_{\Lc}(x) \leq 3k$, the value of $|S|$ given in the table above and $h \leq 1$. Thus, by our definition of $S$ we have $\deg_{\Lc}(x) \leq 1$ for each $x \in S$. Further, $\deg_{\Lc}(x) \leq 3$ for each $x \in V \setminus C$. So, if $c(T)=0$, then
\[
\Delta_T^+ \geq \tbinom{t}{2}+t(2k+b-t)-|S \cap T|-3(t-|S \cap T|), \qquad \Delta_T^- =  k(t+|S \cap T|)
\]
and hence
\begin{equation*}
\Delta_T \geq \tfrac{1}{2}t(2k+2b-t-7)-(k-2)|S \cap T| \geq \tfrac{1}{2}t(2b-t-3) \geq \tfrac{1}{2}t(b-2) > 0
\end{equation*}
where the second inequality follows using $|S \cap T| \leq t$ and the third follows since $t \leq b-1$. So we may assume $c(T) \neq 0$ and hence, from the table, that $(b,k,c(T),t)=(3,3,1,2)$. Then $\deg_{\Lc}(x) \leq 5$ for the unique $x \in C \cap T$, $\deg_{\Lc}(x) \leq 1$ for each $x \in S$, and $\deg_{\Lc}(x) \leq 3$ for each $x \in T \setminus (C \cup S)$. So
\[
\Delta_T^+ \geq 15-5-|S \cap T|-3(1-|S \cap T|), \qquad \Delta_T^- = 3(2+|S \cap T|)
\]
and hence $\Delta_T \geq 1-|S \cap T| \geq 0$ since $|S \cap T| \leq 1$.
\end{proof}

Finally in this section we deal with the case $n=3k+1$. Note that $3k+1$ is $k$-admissible if and only if $k$ is odd.

\begin{lemma}\label{L:3k+1}
Let $k \geq 3$ be an odd integer. Any non-reducible partial $k$-star design of order $3k+1$ with four stars is completable.
\end{lemma}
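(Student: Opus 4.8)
The strategy will mirror the proof of Lemma~\ref{L:3k}: define an explicit $k$-precentral function $p$ on the leftover $L$ of a non-reducible partial $k$-star design $(V,\A)$ of order $n=3k+1$ with $u(3k+1,k)=2\cdot 3 - 2 = 4$ stars, and then verify the hypothesis of Lemma~\ref{L:realisation}, namely $\Delta_T \geq 0$ for every nonempty proper $T \subseteq V$. Write $C=\{x\in V : c(x)\geq 1\}$ for the set of centres; since $|\A|=4$ we have $2\le|C|\le 4$, and I would let $h = 4-|C| \in \{0,1,2\}$ denote a "deficiency" parameter as in the previous lemma. Because $n\equiv 1\mod k$ and the design is non-reducible, every centre is also a leaf of some star, which forces $\deg_{\Lc}(x) = c(x)k + (\text{number of stars having }x\text{ as leaf})$ to satisfy a congruence constraint; in particular no vertex of $C$ has $\deg_{\Lc}$ divisible by $k$, a fact needed to rule out the bad configurations of Lemma~\ref{L:tightness}(ii). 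The target "budget" is $p(V) = \tfrac1k\binom{3k+1}{2} - 4 = \tfrac{(3k+1)3}{2} - 4 = \tfrac{9k-5}{2}$ (an integer since $k$ is odd).

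I would set $p(x) = 3 - c(x)$ on $C$ — so the centre-heavy vertices get small precentral values, preventing them from being centres of too many leftover stars — and then split $V\setminus C$ into a "high-degree" set $S$ chosen by the same greedy rule as before ($\deg_L$ maximal outside $C\cup S$) with $p\equiv 2$ on $S$ and $p\equiv 1$ on $V\setminus(C\cup S)$. The size of $S$ is then pinned down by the requirement that $p(V)$ hit the budget: $p(C) = 3|C| - 4 = 8 - 3h$, so $p(V\setminus C) = \tfrac{9k-5}{2} - 8 + 3h = \tfrac{9k-21}{2}+3h$, forcing $|S| = \tfrac{9k-21}{2}+3h - |V\setminus C| = \tfrac{9k-21}{2}+3h - (3k-2+h) = \tfrac{3k-17}{2}+2h$. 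One should check $0 \le |S| \le n - |C|$; the lower bound $\tfrac{3k-17}{2}+2h \ge 0$ holds once $k\ge 7$ (for $h\ge 1$) or $k \ge 17$ — which is false for small $k$! So small odd $k$, namely $k\in\{3,5\}$ (and possibly $k=7,9,\dots$ depending on $h$), will need separate ad hoc treatment, exactly as $b\in\{3,4,5\}$ did in Lemma~\ref{L:3k}. I expect a short finite table of exceptional $(k,h)$ pairs handled by hand.

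With $p$ defined, the verification of $\Delta_T \ge 0$ proceeds by cases on $t = |T|$, as in Lemma~\ref{L:3k}. For large $t$ (say $t \ge k$ or so), $\Delta^+_T \ge \binom t2 + t(3k+1-t) - 4k$ because at most $4k$ edges of $\Lc$ avoid $T$ only if... more precisely $\Delta^+_T \ge \binom{3k+1}{2} - |E(\Lc)| - |E(K_{V\setminus T})| = \binom{3k+1}{2} - 4k - \binom{3k+1-t}{2}$, while $\Delta^-_T = kp(T) \le k(t+|S|)$; substituting $|S|=\tfrac{3k-17}{2}+2h$ and simplifying should give a quadratic in $t$ that is nonnegative on the relevant range, with the worst case near $t = 3k$ controlled by the $-hk$ slack just as before. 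For small $t$ one uses instead that the $4 - c(T)$ stars centred outside $T$ contribute at most $t$ leaves each to $T$, giving $\Delta^+_T \ge \binom t2 + t(3k+1-t) - kc(T) - (4-c(T))t$ and $\Delta^-_T \le k(3t - c(T))$ (using $p(x)\le 3-c(x)$ on $C\cap T$ and $p\le 2$ elsewhere in $T$), yielding roughly $\Delta_T \ge \tfrac12 t(2c(T) - 9 - t) + (\text{positive terms from }S\cap T)$; this is negative only for a handful of $(c(T),t)$, which are then killed by the pigeonhole/greedy argument showing $\deg_{\Lc}(x)\le 1$ for $x\in S$ together with the non-reducibility congruence on vertices of $C\cap T$. **The main obstacle** will be the small-$k$ exceptions: when $k\in\{3,5\}$ the set $S$ as defined has negative size and the whole scheme degenerates, so one must either redefine $p$ locally (e.g.\ allow some $p$-values to be $0$ or $3$) or do a direct combinatorial completion argument for those orders ($n=10$ and $n=16$), and getting the congruence bookkeeping from non-reducibility to interact cleanly with the $\Delta_T$ inequalities in the borderline small-$t$ cases is where the delicate work lies.
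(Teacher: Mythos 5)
Your overall plan (define an explicit $k$-precentral function on the leftover and verify $\Delta_T \geq 0$ via Lemma~\ref{L:realisation}, splitting on $|T|$) is the paper's approach, but the function you propose is mis-calibrated and the plan fails at the first step. Setting $p(x)=3-c(x)$ on $C$ is too large: take four stars centred at $x_1,x_2,x_3,x_4$ where $x_1$ is a leaf of the stars centred at $x_2,x_3,x_4$ and each $x_i$ ($i\geq 2$) is a leaf of one of the others; this is non-reducible, yet $\deg_{\Lc}(x_1)=k+3$, so $\deg_L(x_1)=2k-3<2k=kp(x_1)$ and already $\Delta_{\{x_1\}}<0$, so by Lemma~\ref{L:realisation} your $p$ is not realisable. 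The same over-allocation on $C$ is what drives your $|S|$ negative for small $k$ (there is also an arithmetic slip: $|V\setminus C|=3k-3+h$, not $3k-2+h$), and it invalidates your Case~1 bound $p(T)\leq t+|S|$, since under your scheme every centre with $c(x)=1$ also gets $p$-value $2$. The correct calibration (as in the paper, mirroring Lemma~\ref{L:3k}) is $p(x)=2-c(x)$ on $C$, $p=2$ on a greedily chosen $S$ with $|S|=h+\tfrac{1}{2}(3k-7)$, and $p=1$ elsewhere; this is well-defined for \emph{all} odd $k\geq 3$, so no separate construction for $k\in\{3,5\}$ is needed at the definition stage.

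Two further gaps. First, you allow $h=2$ (i.e.\ $|C|=2$), but non-reducibility forces $|C|\in\{3,4\}$: each centre must be a leaf of a star centred at another vertex of $C$, and edge-disjointness gives $|E(\Lc[C])|\geq|C|$, hence $|C|\geq 3$. This bound $h\leq 1$ (and the inequality $|E(\Lc[C])|\geq|C|$ itself) is needed both in the large-$t$ estimate, where the slack is $-hk$, and in the pigeonhole count $\sum_{x\in V\setminus C}\deg_{\Lc}(x)\leq 4k-4+h$ that guarantees $\deg_{\Lc}(x)\leq 1$ for $x\in S$. Second, you defer $k\in\{3,5\}$ to ``ad hoc treatment,'' but since the lemma concerns only odd $k$, these are exactly the values where the delicate small-$t$ analysis (the analogue of your ``handful of $(c(T),t)$'' table, plus the residual subcase $(k,c(T))=(3,1)$, $t\in\{2,3\}$) actually occurs; leaving them out leaves the hard core of the proof missing rather than a routine check.
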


\begin{proof}
Let $(V,\A)$ be a non-reducible partial $k$-star design of order $3k+1$ with $|\A| = 4$, let $L$ be the leftover of $(V,\A)$, let $c$ be the central function of $\A$, and let $C=\{x \in V: c(x) \geq 1\}$.  Since $(V,\A)$ is non-reducible, $|E(\Lc[C])| \geq |C|$ and hence $|C| \in \{3,4\}$.
Let $h=0$ if $|C|=4$ and $h=1$ if $|C|=3$. Let $S$ be a subset of $V \setminus C$ such that $|S|=h+\frac{1}{2}(3k-7)$ and $\deg_{L}(y) \geq \deg_{L}(x)$ for each $y \in S$ and $x \in V \setminus (S \cup C)$. Note that $S$ is well-defined because $h+\frac{1}{2}(3k-7) \in \{0,\ldots,3k-3\}$ since $k$ is odd. We define a $k$-precentral function $p$ by setting $p(x)=2-c(x)$ for each $x \in C$, $p(x)=2$ for each $x \in S$, and $p(x)=1$ for each $x \in V \setminus (S \cup C)$. Note that $p$ is indeed a $k$-precentral function because $p(C)=|C|-h$, $p(V \setminus C)=3k+1-|C|+|S|$ and hence
\[p(V)=3k+1-h+|S|=\tfrac{3}{2}(3k+1)-4=\tfrac{1}{k}|E(L)|.\]
Let $T$ be a nonempty proper subset of $V$ and let $t=|T|$. By Lemma~\ref{L:realisation} it suffices to show that $\Delta_T \geq 0$. We consider two cases depending on the value of $t$.\smallskip

\noindent \textbf{Case 1.} Suppose $t \geq k+1$. Then, since $p(T) \leq t+|S|$, we have
\[
\Delta_T^+ \geq \tbinom{t}{2}+t(3k+1-t)-4k, \qquad \Delta_T^- \leq k(t+|S|) = \tfrac{1}{2}k(3k+2t+2h-7)
\]
and hence $\Delta_T \geq \tfrac{1}{2}(t-k)(3k+1-t)-hk$. Thus, noting that $h \leq 1$ and $k+1 \leq t \leq 3k$, we have $\Delta_T \geq 0$. \smallskip

\noindent \textbf{Case 2.} Suppose $t \leq k$. There are $4-c(T)$ stars in $\A$ centred on vertices in $V \setminus T$ and any such star can have at most $t$ leaves in $T$. Also, for each $x \in T$, we have $p(x) \leq 2-c(x)$. So
\[
\Delta_T^+ \geq \tbinom{t}{2}+t(3k+1-t)-kc(T)-(4-c(T))t, \qquad \Delta_T^- \leq k(2t-c(T))
\]
and hence
\begin{equation}\label{E:3k+1}
    \Delta_T \geq \tfrac{1}{2}t(2k+2c(T)-7-t).
\end{equation}
We can assume that the right side of \eqref{E:3k+1} is negative for otherwise $\Delta_T \geq 0$ and we are done. In particular, we have $t \geq 2k-6$ since $c(T) \geq 0$. Since $t \leq k$ and $k$ is odd, this implies $k \in \{3,5\}$. The following table details, in both of these subcases, the value of $|S|$ and the possible values of $(c(T),t)$ that make the right side of \eqref{E:3k+1} negative.

\begin{center}
\begin{tabular}{c||c|c}
      $k$ & $|S|$ & possible $(c(T),t)$  \\ \hline
      3 & $1+h$ & $(0,1)$, $(0,2)$, $(0,3)$, $(1,2)$, $(1,3)$  \\
      5 & $4+h$ & $(0,4)$, $(0,5)$  \\
\end{tabular}
\end{center}

We claim there must be at least $|S|$ vertices in $V \setminus C$ whose degree in $\Lc$ is at most 1. In both subcases this follows from the pigeonhole principle using $|V \setminus C| = 3k-3+h$, $\sum_{x \in V \setminus C}\deg_{\Lc}(x) = 4k-|E(\Lc[C])|\leq 4k-4+h$ (recall $|E(\Lc[C])|\geq|C|$), the value of $|S|$ given in the table above, and $h \leq 1$. Thus, by our definition of $S$, we have $\deg_{\Lc}(x) \leq 1$ for each $x \in S$. Further, $\deg_{\Lc}(x) \leq 4$ for each $x \in V \setminus C$. So, if $c(T)=0$, then
\[
\Delta_T^+ \geq \tbinom{t}{2}+t(3k+1-t)-|S \cap T|-4(t-|S \cap T|), \qquad \Delta_T^- = k(t+|S \cap T|)
\]
and hence
\begin{equation*}
\Delta_T \geq \tfrac{1}{2}t(4k-t-7)-(k-3)|S \cap T| \geq \tfrac{1}{2}t(2k-t-1) \geq \tfrac{1}{2}t(k-1) > 0
\end{equation*}
where the second inequality follows using $|S \cap T| \leq t$ and the third follows since $t \leq k$. So we may assume $c(T) \neq 0$ and hence, from the table, that $(k,c(T))=(3,1)$ and $t \in \{2,3\}$. Then $\deg_{\Lc}(x) \leq 6$ for the unique $x \in C \cap T$, $\deg_{\Lc}(x) \leq 1$ for each $x \in S$, and $\deg_{\Lc}(x) \leq 4$ for each $x \in T \setminus (C \cup S)$. So
\[
\Delta_T^+ \geq \tbinom{t}{2}+t(10-t)-6-|S \cap T|-4(t-1-|S \cap T|), \qquad \Delta_T^- = 3(t+|S \cap T|)
\]
and hence $\Delta_T \geq \frac{1}{2}(t-1)(4-t) > 0$ since $t \in \{2,3\}$.
\end{proof}

\section{Suitable precentral functions}\label{S:suitable}

For the leftovers of partial $k$-star designs of orders greater than $3k+1$, we will define the $k$-precentral functions that we aim to realise in a more elaborate way. This section is devoted to explaining how these $k$-precentral functions are defined and proving that they possess certain useful properties. First, however, we require a lemma which tells us that, in the leftover of a partial $k$-star design of order $n$ with $u(n,k)$ stars, not too many vertices can have small degree.

\begin{lemma}\label{L:minimumDegsNew}
Let $k \geq 3$ and $n > 3k+1$ be integers such that $n$ is $k$-admissible, let $(V,\A)$ be a non-reducible partial $k$-star design of order $n$ with at most $u(n,k)$ stars, and let $L$ be the leftover of $(V,\A)$. Then
\begin{itemize}
    \item[\textup{(i)}]
at most one vertex of $L$ has degree at most $k$,
    \item[\textup{(ii)}]
if two adjacent vertices of $L$ have degree less than $2k$, then every other vertex has degree at least $2k$.
\end{itemize}
\end{lemma}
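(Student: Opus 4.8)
The plan is to bound the number of edges in $\Lc$ from above and then argue by counting degrees. Since $(V,\A)$ is non-reducible with at most $u(n,k)$ stars, we have $|E(\Lc)| \leq k\,u(n,k)$. Writing $n=ak+b$ with $b\in\{1,3,4,\dots,k\}$ and using \eqref{E:uDefab}, this gives $|E(\Lc)| \leq k(2a-1) = 2(n-b)-k < 2n-k$ when $b\in\{3,\dots,k\}$, and $|E(\Lc)| \leq k(2a-2)=2(n-1)-2k < 2n-2k$ when $b=1$; in either case $|E(\Lc)| \leq 2n-k-1$, so $\sum_{x\in V}\deg_{\Lc}(x) = 2|E(\Lc)| \leq 4n-2k-2$. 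Equivalently, since $\deg_L(x) = n-1-\deg_{\Lc}(x)$, a vertex $x$ has $\deg_L(x) \leq ck$ exactly when $\deg_{\Lc}(x) \geq n-1-ck$, and I will track the ``excess'' $\deg_{\Lc}(x)$ of low-$L$-degree vertices against the total $4n-2k-2$.

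For part (i), suppose two distinct vertices $x_1,x_2$ both have $\deg_L(x_i)\leq k$. Then each contributes $\deg_{\Lc}(x_i) \geq n-1-k$ to $\sum_x \deg_{\Lc}(x)$, so $\sum_x\deg_{\Lc}(x) \geq 2(n-1-k) = 2n-2k-2$. I would like to push this to a contradiction with $\sum_x \deg_{\Lc}(x) \leq 4n-2k-2$; crude counting alone leaves a gap of $2n$, so the argument must also use that the \emph{other} $n-2$ vertices each have degree at least the number of edges they receive from $x_1,x_2$ in $\Lc$. More carefully: the edges of $\Lc$ incident with $x_1$ or $x_2$ number at least $2(n-1-k) - |E(\Lc[\{x_1,x_2\}])| \geq 2(n-1-k)-1$, and these must be counted among the $\leq k\,u(n,k)$ edges of $\Lc$ together with at least enough further edges to make $|E(\Lc)|$ large enough — but this still isn't obviously a contradiction. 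The cleaner route, which I expect to be the actual argument, is to combine the edge bound with the structural fact from Lemma~\ref{L:tightness}-style reasoning: if $\deg_L(x_i)\leq k$ with $x_i$ the centre of no star that covers all its leftover non-neighbours, one can still add a star; but here the key is simply that $2(n-1-k) > k\,u(n,k) = k(2a-1)$ or $k(2a-2)$ when $n>3k+1$, i.e. $n = ak+b > 3k+1$ forces $a\geq 3$ (when $b\in\{3,\dots,k\}$) or $a\geq 4$ (when $b=1$), and then $2(n-1-k)=2(a-1)k+2b-2$ exceeds $k(2a-1)$ precisely when $2b-2 > k$... which fails for small $b$. So the honest statement is that crude counting is \emph{not} quite enough, and I would instead count all of $|E(\Lc)|$: the $\leq 1$ edge between $x_1,x_2$, the $\geq 2(n-1-k)-1$ edges from $\{x_1,x_2\}$ to the rest, plus edges among the rest, and show $2(n-1-k)-1 > k\,u(n,k)$, which does hold once $n>3k+1$ because then the deficit $n-1-k$ is at least $2k+1$ roughly, giving $2(2k+1)-1 = 4k+1 > k\cdot 5$ only for... hmm. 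I expect the paper handles the residual tight cases (small $a$, small $b$) by a short separate pigeonhole exactly as in Lemmas~\ref{L:3k} and \ref{L:3k+1}, so I would mirror that: first dispose of $a \geq a_0$ for a threshold $a_0$ by the clean edge-count inequality, then check the finitely many remaining $(a,b)$ by hand.

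For part (ii), suppose $x_1x_2 \in E(L)$ with $\deg_L(x_i) < 2k$ for $i\in\{1,2\}$, i.e. $\deg_{\Lc}(x_i) \geq n-2k$ (and since $x_1x_2\notin E(\Lc)$, the sets of $\Lc$-neighbours of $x_1$ and $x_2$ overlap in at most $n-2$ vertices but more usefully $|E(\Lc[\{x_1,x_2\}])|=0$). Then the edges of $\Lc$ incident with $\{x_1,x_2\}$ number at least $\deg_{\Lc}(x_1)+\deg_{\Lc}(x_2) - (\text{common neighbours}) \geq 2(n-2k) - (n-2) = n-4k+2$; combined with $|E(\Lc)| \leq k\,u(n,k) \leq 2n-k-1$, this bounds the edges \emph{not} incident with $\{x_1,x_2\}$ by $2n-k-1-(n-4k+2) = n+3k-3$. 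Now if some third vertex $y$ had $\deg_L(y) < 2k$, i.e. $\deg_{\Lc}(y)\geq n-2k$, then the edges of $\Lc$ incident with $\{x_1,x_2,y\}$ number at least $\deg_{\Lc}(x_1)+\deg_{\Lc}(x_2)+\deg_{\Lc}(y)$ minus overcounting of at most... and I would push for the contradiction $3(n-2k) - (\text{overcount}) > k\,u(n,k)$. The main obstacle, as in (i), is controlling the overcounting (shared neighbours), so I expect to use that any edge is counted at most twice if both endpoints lie in $\{x_1,x_2,y\}$ — but there are at most $\binom{3}{2}$ such, and $x_1x_2\notin E(\Lc)$ kills one — giving edges incident with $\{x_1,x_2,y\}$ at least $\tfrac12(3(n-2k)) $ only if degrees were spread out; more precisely the number of such edges is at least $\max_i \deg_{\Lc}(x_i) \geq n-2k$ trivially and at least $\tfrac{1}{2}(\deg_{\Lc}(x_1)+\deg_{\Lc}(x_2)+\deg_{\Lc}(y)) - O(1)$, hence $\geq \tfrac{3}{2}(n-2k) - 2$. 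So I need $\tfrac{3}{2}(n-2k)-2 > k\,u(n,k) \approx 2n$, i.e. roughly $\tfrac{3}{2}n - 3k > 2n$, which is false — so again crude counting fails and the real argument must be more delicate, likely splitting on whether $y$ is adjacent in $\Lc$ to $x_1,x_2$, or adjacent in $L$, and again cleaning up small $(a,b)$ separately.

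In summary, the skeleton I would write is: (1) record $|E(\Lc)| \leq k\,u(n,k)$ and translate to degree sums; (2) for (i), a short counting argument that two vertices of $L$-degree $\leq k$ force $|E(\Lc)|$ too large, valid for $a$ above a small threshold, with the handful of remaining $(a,b)$ checked directly; (3) for (ii), the same template — assume three vertices violate, count edges of $\Lc$ incident with them, derive $|E(\Lc)| > k\,u(n,k)$ for $a$ large, finish the small cases by hand. \textbf{The main obstacle} I anticipate is exactly the bookkeeping of shared $\Lc$-neighbours among the two or three special vertices: the naive bounds are off by an additive $O(n)$, so the proof must exploit either the $L$-adjacency of $x_1,x_2$ (forcing $x_1x_2 \notin E(\Lc)$, which saves one edge but not $O(n)$) or — more likely the intended idea — that the special vertices having huge $\Lc$-degree leaves essentially no ``room'' elsewhere because $u(n,k)$ is only linear in $n$ while three near-universal vertices in $\Lc$ already demand close to $\tfrac{3}{2}n$ edges, and $\tfrac{3}{2}n > k\cdot\tfrac{2n}{k} = 2n$ is false, so in fact the theorem must be using something sharper, namely that $n>3k+1$ makes $n-2k$ a \emph{constant fraction} bigger than would be consistent. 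I would resolve this by being careful that $u(n,k) < \tfrac{2n}{k}$ and that three vertices of $\Lc$-degree $\geq n-2k$ contribute more than $2n-O(k)$ \emph{distinct} incident edges once their pairwise $\Lc$-overlaps are bounded using $|E(\Lc)|$ itself — a short fixed-point/feasibility argument — and isolate the genuinely small cases for a direct check.
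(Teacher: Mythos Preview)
Your approach via the global edge bound $|E(\Lc)| \leq k\,u(n,k)$ cannot work, and you essentially discover this yourself: two vertices with $\deg_{\Lc}\approx n-k$ contribute only about $n$ distinct edges after overcounting, while the budget is about $2n$, so there is no contradiction. Your repeated attempts to close this $O(n)$ gap by tracking shared neighbours or ``fixed-point/feasibility'' arguments are not going anywhere---the information simply is not there in the global count.

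The paper's proof uses a different and much sharper per-vertex idea that you are missing. For a vertex $y$ one has the exact decomposition
\[
\deg_{\Lc}(y) = k\,c(y) + (\text{number of stars in }\A\text{ having }y\text{ as a leaf}),
\]
where $c$ is the central function. The second summand is at most $u - c(y) - c(z)$ (plus at most $1$ for a possible leaf-edge from $z$), where $z$ is any other specified vertex. For part~(i), pick $y,z$ to be the two alleged low-degree vertices with $c(y)\leq c(z)$; then $c(y)\leq \lfloor u/2\rfloor$ and one gets $\deg_{\Lc}(y)\leq u+(k-2)c(y)$, which after substituting $c(y)\leq a-1$ (or $\leq a-2$ when $b=1$, using non-reducibility) gives $\deg_L(y)\geq k+1$. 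For part~(ii), the same scheme works with three vertices: choose $y$ among them minimising $c(y)$, so $c(y)\leq\lfloor u/3\rfloor$, and a short argument (using that $L[S]$ is nonempty) controls the extra leaf-edge terms; one obtains $\deg_L(y)\geq 2k$ for $a\geq 5$ directly, with $a\in\{3,4\}$ checked from the explicit bound. The non-reducibility hypothesis is used precisely to sharpen $c(y)\leq a-1$ to $c(y)\leq a-2$ in the $b=1$ case of part~(i), which is where your plan never engages with that hypothesis at all.
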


\begin{proof}
Let $u=u(n,k)$. Let $n=ak+b$ where $a \geq 3$ is an integer and $b \in \{1\} \cup \{3,\ldots,k\}$, and let $c$ be the central function of $\A$. We first prove (i). Let $y$ and $z$ be distinct vertices in $V$. We can suppose without loss of generality that $c(y) \leq c(z)$ and that, if $c(y) = c(z)$, then no star centred at $z$ has $y$ as a leaf. It suffices to show that $\deg_L(y) \geq k+1$. Let $\ell=1$ if a star centred at $z$ has $y$ as a leaf and $\ell=0$ otherwise. Now $u-c(y)-c(z)$ stars in $\A$ are centred on vertices in $V \setminus \{y,z\}$ and hence
\begin{equation}\label{E:minDegs}
    \deg_{\Lc}(y) \leq kc(y)+\bigl(u-c(y)-c(z)\bigr) + \ell \leq u+(k-2)c(y)
\end{equation}
where the inequality follows because, from our supposition without loss of generality, either $c(z) \geq c(y)+1$ or $c(y) = c(z)$ and $\ell=0$. If $b=1$, then $u=2a-2$, at least three vertices must have stars centred at them because $(V,\A)$ is not reducible, and hence $c(y) \leq a-2$. If $b \in \{3,\ldots,k\}$, then $u=2a-1$ and hence $c(y) \leq a-1$. Thus, from \eqref{E:minDegs},
\[\deg_{\Lc}(y) \leq
\begin{cases}
k(a-2)+2  & \text{if $b=1$}\\
k(a-1)+1 & \text{if $b \in \{3,\ldots,k\}$.}
\end{cases}\]
Now $\deg_{L}(y) = ak+b-1-\deg_{\Lc}(y)$. So $\deg_{L}(y)  \geq 2k-2 \geq k+1$ if $b=1$ and $\deg_{L}(y)  \geq k+b-2 \geq k+1$ if $b \in \{3,\ldots,k\}$.

Now we prove (ii). Let $S$ be a subset of $V(L)$ such that $|S| = 3$ and $L[S]$ is nonempty. To complete the proof it suffices to show that some vertex in $S$ has degree at least $2k$. For each $x \in S$, let $\ell_x \in \{0,1,2\}$ denote the number of stars that are centred at a vertex in $S$ and have $x$ as a leaf. Let $y$ be a vertex in $S$ such that $c(y) \leq c(x)$ for each $x \in S$ and $\ell_y \leq \ell_z$ for each $z \in S'$ where $S'=\{x \in S: c(x)=c(y)\}$. We will show that $\deg_L(y) \geq 2k$. Let $m=c(S)-3c(y)$ and notice that, by the definition of $y$, $m$ is nonnegative, $|S'|=3$ if $m=0$, and $|S'|=2$ if $m=1$. Hence, because $L[S]$ is nonempty, $(m,\ell_y) \notin \{(0,1),(0,2),(1,2)\}$ by the definition of $y$ and so $m \geq \ell_y$. Thus,
\begin{multline}\label{E:minDegsii}
\deg_L(y) \geq ak+b-1-\ell_y-kc(y)-\bigl(u-c(S)\bigr) = ak+b-1-\ell_y-(k-3)c(y) -u+m \\ \geq ak+b-1-(k-3)\left\lfloor \tfrac{u}{3} \right\rfloor -u.
\end{multline}
where the equality follows because $c(S) = 3c(y)+m$, and the final inequality uses $m \geq \ell_y$ and $c(y) \leq \lfloor\frac{u}{3}\rfloor$. Using $\lfloor \frac{u}{3} \rfloor \leq \frac{u}{3}$ and \eqref{E:uDefab}, we have $\deg_L(y) \geq \frac{1}{3}(a+2)k$ if $b=1$ and $\deg_L(y) \geq \frac{1}{3}(a+1)k+b-1$ if $b\in \{3,\ldots,k\}$. Hence $\deg_L(y) \geq 2k$ when $a \geq 5$. For values of $a$ less than $5$, we consider the lower bound given by \eqref{E:minDegsii} in a number of cases.
\begin{center}
\begin{tabular}{l|l|l}
  case & $u$ & lower bound on $\deg_L(y)$ \\  \hline
  $a=3$, $b\in \{3,\ldots,k\}$ & 5 & $2k+b-3$ \\
  $a=4$, $b=1$ & 6 & $2k$ \\
  $a=4$, $b\in \{3,\ldots,k\}$ & 7 & $2k+b-2$
\end{tabular}
\end{center}
So $\deg_L(y) \geq 2k$ in each case.
\end{proof}

We introduce some notation and definitions relating to $k$-precentral functions. Let $G$ be a graph and let $p$ be a $k$-precentral function for $G$.
We say that $p$ is \emph{proportional} if $p(x) \in \{\lfloor\frac{1}{2k}\deg_G(x)\rfloor,\lceil\frac{1}{2k}\deg_G(x)\rceil\}$ for each $x \in V(G)$. Note that we can always find a proportional $k$-precentral function $p$ for $G$ by first choosing a set $S \subseteq V(G)$ such that $\frac{1}{k}|E(G)|=|S|+\sum_{x \in V(G)}\lfloor\frac{1}{2k}\deg_G(x)\rfloor$ and then setting $p(x)=\lfloor\frac{1}{2k}\deg_G(x)\rfloor$ for $x \in V(G) \setminus S$ and $p(x)=\lfloor\frac{1}{2k}\deg_G(x)\rfloor+1$ for $x \in S$. We define $p^*(x)=p(x)-\frac{1}{2k}\deg_G(x)$ for each $x \in V(G)$. Intuitively, we can think of $p^*(x)$ as the `rounding' that has been applied to $\frac{1}{2k}\deg_G(x)$ to obtain $p(x)$. The definitions of $k$-precentral function and proportional immediately give some basic properties of $p^*$ which we use often and encapsulate in the following lemma.

\begin{lemma}\label{L:gammaStarProperties}
Let $G$ be a graph and let $p$ be a $k$-precentral function for $G$.
\begin{itemize}
    \item[\textup{(i)}]
$p^*(V(G))=0$.
    \item[\textup{(ii)}]
If $p$ is proportional then, for each $x \in V(G)$, $p^*(x)\in \{\frac{-2k+1}{2k},\frac{-2k+2}{2k},\ldots,\frac{2k-1}{2k}\}$.
\end{itemize}
\end{lemma}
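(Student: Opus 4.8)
The plan is to obtain both parts by unwinding the definitions of a $k$-precentral function, of \emph{proportional}, and of $p^*$, together with the handshake identity $\sum_{x \in V(G)}\deg_G(x)=2|E(G)|$. Part (i) is a single line: summing $p^*(x)=p(x)-\frac{1}{2k}\deg_G(x)$ over all $x\in V(G)$ and using the abbreviation $f(T)=\sum_{x\in T}f(x)$ gives
\[
p^*(V(G))=p(V(G))-\tfrac{1}{2k}\!\!\sum_{x\in V(G)}\!\!\deg_G(x)=\tfrac{1}{k}|E(G)|-\tfrac{1}{2k}\cdot 2|E(G)|=0,
\]
where the first term is evaluated using that $p$ is a $k$-precentral function and the second using handshake.

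For part (ii), I would fix a vertex $x\in V(G)$ and write $\deg_G(x)=2kq+r$ with $q\in\mathbb{Z}^{\geq 0}$ and $r\in\{0,1,\ldots,2k-1\}$, so that $\lfloor\frac{1}{2k}\deg_G(x)\rfloor=q$, while $\lceil\frac{1}{2k}\deg_G(x)\rceil$ equals $q$ when $r=0$ and $q+1$ when $r\geq 1$. Since $p$ is proportional, $p(x)\in\{q,\lceil\frac{1}{2k}\deg_G(x)\rceil\}$, and hence $p^*(x)=p(x)-q-\frac{r}{2k}$ is either $-\frac{r}{2k}$ or, when $r\geq 1$, $\frac{2k-r}{2k}$. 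In every case $p^*(x)$ is an integer multiple of $\frac{1}{2k}$ lying in the closed interval $[\frac{-2k+1}{2k},\frac{2k-1}{2k}]$ (the value $0$ arising exactly when $2k\mid\deg_G(x)$), which is precisely the claimed set.

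I do not anticipate any real obstacle: the lemma is a bookkeeping consequence of the definitions, and the only point requiring a little care in part (ii) is to split on whether $2k$ divides $\deg_G(x)$, so that the case $r=0$ (where floor and ceiling coincide and $p^*(x)=0$) is handled cleanly and the endpoints of the stated range are not overshot.
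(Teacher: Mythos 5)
Your proof is correct: part (i) is exactly the handshake computation and part (ii) the floor/ceiling case split, which is the same (essentially only) argument the paper has in mind — it states the lemma as an immediate consequence of the definitions and omits the details you have written out.
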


We further define a proportional $k$-central function $p$ for $G$ as \emph{minimal} if, among all the proportional $k$-central functions for $G$, $p$ has a minimum value of $\sum_{x \in V(G)}|p^*(x)|$. The following simple lemma will be useful. 

\begin{lemma}\label{L:diffAtMost1}
If $G$ is a graph and $m$ is a minimal $k$-precentral function on $G$, then $m^*(y)-m^*(z) \leq 1$ for all $y,z \in V(G)$.
\end{lemma}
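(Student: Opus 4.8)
The plan is to argue by contradiction, exploiting the minimality of $m$ with respect to $\sum_{x}|m^*(x)|$. Suppose for a contradiction that there exist $y,z \in V(G)$ with $m^*(y) - m^*(z) > 1$. Since both $m^*(y)$ and $m^*(z)$ are of the form (integer) $-\frac{1}{2k}\deg_G(\cdot)$, and more to the point since $m$ is proportional, part (ii) of Lemma~\ref{L:gammaStarProperties} tells us $m^*(y),m^*(z) \in \{\frac{-2k+1}{2k},\ldots,\frac{2k-1}{2k}\}$. The idea is to decrease $m(y)$ by $1$ and increase $m(z)$ by $1$, producing a new function $m'$, and to check that $m'$ is still a proportional $k$-precentral function with strictly smaller $\sum_x |m'^*(x)|$, contradicting minimality.

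First I would verify that $m'$ is a $k$-precentral function: it takes values in $\mathbb{Z}^{\geq 0}$ and $m'(V(G)) = m(V(G)) = \frac1k|E(G)|$. The nonnegativity of $m'(y)$ needs a small check: we need $m(y) \geq 1$, which follows because $m^*(y) = m(y) - \frac{1}{2k}\deg_G(y) > 1 + m^*(z) \geq 1 + \frac{-2k+1}{2k} = \frac{1}{2k} > 0$, so $m(y) > \frac{1}{2k}\deg_G(y) \geq 0$, and since $m(y)$ is a nonnegative integer exceeding something $\geq 0$... more carefully, $m(y) = m^*(y) + \frac{1}{2k}\deg_G(y) > 1$, so $m(y) \geq 2 \geq 1$ (as $m(y)$ is an integer). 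Hence $m'(y) = m(y)-1 \geq 1 \geq 0$.

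Next I would check that $m'$ is proportional and that $\sum_x |m'^*(x)|$ strictly decreased. We have $m'^*(y) = m^*(y) - 1$ and $m'^*(z) = m^*(z) + 1$, with all other values unchanged. Since $m^*(y) - m^*(z) > 1$ we have $m^*(y) > 1 + m^*(z) \geq 1 - \frac{2k-1}{2k} > 0$, so $m^*(y) > 0$ and thus $|m'^*(y)| = |m^*(y) - 1|$; because $0 < m^*(y) - m^*(z) $ hmm — I need $m^*(y) \le 1$ to conclude $|m^*(y)-1| = 1 - m^*(y) < m^*(y)$ wait that's not automatic either. Let me instead directly bound: $|m^*(y)| + |m^*(z)| - |m'^*(y)| - |m'^*(z)| = |m^*(y)| + |m^*(z)| - |m^*(y)-1| - |m^*(z)+1|$. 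Writing $\alpha = m^*(y)$, $\beta = m^*(z)$ with $\alpha - \beta > 1$, one checks $|\alpha| - |\alpha-1| \geq -1$ always, with equality iff $\alpha \leq 0$, and $|\beta| - |\beta+1| \geq -1$ always, with equality iff $\beta \geq 0$; so the total change is $\geq -1$... that's the wrong direction. The clean fact is: $|\alpha| + |\beta| - |\alpha - 1| - |\beta + 1| \geq 0$ whenever... actually let me reconsider — the right elementary inequality is that for reals with $\alpha > \beta + 1 $ we have $|\alpha - 1| + |\beta + 1| \leq |\alpha| + |\beta|$, which holds because shifting two values toward each other (each by $1$, and they remain with $\alpha - 1 \geq \beta + 1$, i.e.\ they don't cross) cannot increase the sum of absolute values — indeed $|\alpha| + |\beta| - |\alpha-1| - |\beta+1| = (|\alpha| - |\alpha - 1|) + (|\beta| - |\beta+1|)$, and since $\alpha > 1 $ forces... no. I will just split into the cases according to the signs of $\alpha, \alpha - 1, \beta, \beta+1$; since $\alpha > \beta + 1$, the pair $(\alpha, \beta)$ cannot both be "small", and a short case analysis (at most four cases) shows the sum of absolute values does not increase, and in fact decreases unless $\alpha = 1$ and $\beta = -1$ simultaneously — but that contradicts $\alpha - \beta > 1$ being... wait $1 - (-1) = 2 > 1$, so that case is allowed and gives no decrease. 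Hmm, so I need to be careful: when $\alpha = 1, \beta = -1$, $m'^*(y) = 0, m'^*(z) = 0$, and $\sum |m'^*| = \sum|m^*| - 2 < \sum |m^*|$, so it does decrease. Good — the degenerate-looking case is fine. The genuine worry is whether proportionality of $m'$ is preserved, i.e.\ whether $m'(y) \in \{\lfloor \frac{1}{2k}\deg_G(y)\rfloor, \lceil \frac{1}{2k}\deg_G(y)\rceil\}$, equivalently $m'^*(y) \in [-\frac{2k-1}{2k}, \frac{2k-1}{2k}]$... but actually proportionality requires $m'^*(y) \in (-1, 1)$ roughly; since $m'^*(y) = m^*(y) - 1 $ and $m^*(y) \in (-1,1)$... if $m^*(y)$ is close to $1$ this is fine, but if $m^*(y)$ is close to $0$ then $m'^*(y)$ is close to $-1$, still in range, but if $m^*(y) < 0$ then $m'^*(y) < -1$ and proportionality breaks.

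So the plan's main obstacle, and the step requiring real care, is this: I must not assume $m'$ is proportional; instead the correct argument is that $\alpha = m^*(y) - m^*(z) > 1$ together with $\alpha, \beta \in (-1,1)$ forces $m^*(y) > 0 > m^*(z)$ — wait, does it? We'd need $m^*(y) > \frac12$ and $m^*(z) < -\frac12$? From $m^*(y) - m^*(z) > 1$, $m^*(y) \le \frac{2k-1}{2k}$, $m^*(z) \ge \frac{-2k+1}{2k}$ we get $m^*(y) > 1 + m^*(z) \ge 1 - \frac{2k-1}{2k} = \frac{1}{2k} > 0$ and $m^*(z) < m^*(y) - 1 \le \frac{2k-1}{2k} - 1 = -\frac{1}{2k} < 0$. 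So $m^*(y) \in (0, \frac{2k-1}{2k}]$, giving $m'^*(y) = m^*(y) - 1 \in (-1, -\frac{1}{2k}]$, hence $m'(y) = \lfloor \frac{1}{2k}\deg_G(y)\rfloor$ provided $\frac{1}{2k}\deg_G(y)$ is not an integer... actually $m'^*(y) \in (-1, 0]$ means $m'(y) - \frac{1}{2k}\deg_G(y) \in (-1,0]$, i.e.\ $m'(y) \in (\frac{1}{2k}\deg_G(y) - 1, \frac{1}{2k}\deg_G(y)]$, which is exactly $\lfloor \frac{1}{2k}\deg_G(y)\rfloor$ or (if integral) equal to $\frac{1}{2k}\deg_G(y)$; either way $m'(y) \in \{\lfloor\cdot\rfloor, \lceil\cdot\rceil\}$. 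Symmetrically $m'^*(z) = m^*(z) + 1 \in [\frac{1}{2k}, 1)$, giving $m'(z) = \lceil \frac{1}{2k}\deg_G(z)\rceil$ (or the integer value). Thus $m'$ is proportional. Finally, since $m^*(y) > 0$ and $m^*(y) \le \frac{2k-1}{2k} < 1$ we get $|m'^*(y)| = |m^*(y)-1| = 1 - m^*(y) < 1 - 0 = 1$, and $|m^*(y)| = m^*(y)$; similarly for $z$. Then $|m'^*(y)| + |m'^*(z)| = (1 - m^*(y)) + (m^*(z) + 1) = 2 - (m^*(y) - m^*(z)) < 2 - 1 = 1 < m^*(y) + (-m^*(z)) = |m^*(y)| + |m^*(z)|$? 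We need $2 - (m^*(y) - m^*(z)) < m^*(y) - m^*(z)$, i.e.\ $m^*(y) - m^*(z) > 1$, which is exactly our assumption. Hence $\sum_x |m'^*(x)| < \sum_x |m^*(x)|$, contradicting minimality of $m$. This completes the proof. The only subtlety to write out carefully is the chain showing $m^*(y) \in (0,1)$ and $m^*(z) \in (-1,0)$ and that $m'$ remains proportional with nonnegative integer values; everything else is a one-line computation.
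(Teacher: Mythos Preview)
Your approach is essentially identical to the paper's: assume $m^*(y)-m^*(z)>1$, use Lemma~\ref{L:gammaStarProperties}(ii) to deduce $m^*(y)>0>m^*(z)$, transfer one unit from $y$ to $z$ to obtain a new proportional $k$-precentral function, and compute $|m'^*(y)|+|m'^*(z)|=2-(m^*(y)-m^*(z))<1<m^*(y)-m^*(z)=|m^*(y)|+|m^*(z)|$ to contradict minimality. One small slip: your ``more careful'' claim that $m(y)=m^*(y)+\frac{1}{2k}\deg_G(y)>1$ (hence $m(y)\geq 2$) is unjustified and can fail, but you only need $m(y)\geq 1$, which you had already correctly obtained from $m^*(y)>0$ and $m(y)\in\mathbb{Z}^{\geq 0}$.
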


\begin{proof}
Suppose otherwise that there are vertices $y$ and $z$ of $G$ such that $m^*(y)-m^*(z) > 1$. By Lemma~\ref{L:gammaStarProperties}(ii) it must be the case that $m^*(y)$ is positive and hence $m(y) \geq 1$, and that $m^*(z)$ is negative. Define $k$-precentral function $p$ by $p(y)=m(y)-1$, $p(z)=m(z)+1$ and $p(x)=m(x)$ for each $x \in V(G) \setminus \{y,z\}$. Then $p^*(y)=m^*(y)-1$, $p^*(z)=m^*(z)+1$ and $p^*(x)=m^*(x)$ for each $x \in V(G) \setminus \{y,z\}$. So $p$ is proportional because $-1 < p^*(x) < 1$ for each $x \in V(G)$. Furthermore, \[|p^*(y)|+|p^*(z)|=\bigl(1-m^*(y)\bigr)+m^*(z)+1=2-\bigl(m^*(y)-m^*(z)\bigr) < 1\]
whereas $|m^*(y)|+|m^*(z)|=m^*(y)-m^*(z)>1$. Thus $\sum_{x \in V(G)}|p^*(x)|<\sum_{x \in V(G)}|m^*(x)|$, contradicting the fact that $m$ is minimal.
\end{proof}

Sometimes a minimal $k$-precentral function can have an obvious `flaw' that prevents it from being realisable. Let $m$ be a minimal $k$-precentral function for a graph $G$ with vertex set $V$. We say a vertex $y \in V$ is \emph{bad under $m$} if $\deg_G(y) < k$ and $m(y) = 1$, and we say an edge $y_1y_2 \in E(G)$ is \emph{bad under $m$} if $m(y_1)=m(y_2)=0$. When one of these flaws arises, we can repair it by modifying the function slightly. We say a $k$-precentral function $s$ for $G$ is \emph{suitable} if one of the following holds.
\begin{itemize}
    \item[(i)]
$s=m$ for a minimal $k$-precentral function $m$ under which no vertex or edge is bad.
    \item[(ii)]
$s$ is obtained from a minimal $k$-precentral function $m$ for $G$ under which $y \in V$ is bad by setting $s(y)=m(y)-1=0$, $s(z)=m(z)+1$ for some vertex $z$ such that $m^*(z)=\min\{m^*(x): x \in V\}$, and $s(x)=m(x)$ for each $x \in V \setminus \{y,z\}$. In this case we say $m$ is \emph{obtained from $s$ by repairing vertex $y$}. Note $z \neq y$ by Lemma~\ref{L:gammaStarProperties}(i) since $m^*(y)$ is positive.
    \item[(iii)]
$s$ is obtained from a minimal $k$-precentral function $m$ for $G$ under which an edge $y_1y_2 \in E(G)$ with $\deg_G(y_1) \leq \deg_G(y_2)$ is bad by setting $s(y_2)=m(y_2)+1=1$, $s(z)=m(z)-1$ for some vertex $z \in V$ such that $m^*(z)=\max\{m^*(x): x \in V\}$, and $s(x)=m(x)$ for each $x \in V \setminus \{y_2,z\}$. In this case we say $m$ is \emph{obtained from $s$ by repairing edge $y_1y_2$ at $y_2$}. Note that $\deg_G(y_2) \geq 1$, so $m^*(y_2)$ is negative and hence $m^*(z)$ is positive by Lemma~\ref{L:gammaStarProperties}(i) and $m(z) \geq 1$.
\end{itemize}

From this definition it is clear that, for a given graph $G$ with $|E(G)| \equiv 0 \mod{k}$, we can always obtain a suitable $k$-precentral function for $G$ by taking a minimal $k$-precentral function for $G$ and, if necessary, modifying it appropriately. However, it seems possible that a suitable $k$-precentral function could still be bad at some vertex or edge. The next lemma shows that this cannot happen and also that suitable $k$-precentral functions are always proportional.

\begin{lemma}\label{L:suitableIsGood}
Let $k \geq 3$ and $n > 3k+1$ be integers, and let $L$ be the leftover of a partial $k$-star design $(V,\A)$ of order $n$ with $u(n,k)$ stars. If $s$ is a suitable $k$-precentral function on $L$, then $s$ is proportional and no vertex or edge of $L$ is bad under $s$.
\end{lemma}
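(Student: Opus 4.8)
The plan is to dispose of the three clauses of the definition of ``suitable'' in turn. Clause~(i) is immediate, since there $s$ equals a minimal $m$ under which nothing is bad and minimal $k$-precentral functions are proportional by definition. The work lies in clauses~(ii) and~(iii), where $s$ is obtained from a minimal $m$ by repairing a bad vertex $y$, respectively a bad edge $y_1y_2$ with $\deg_L(y_1)\leq\deg_L(y_2)$. I will use repeatedly that a $k$-precentral function $p$ is proportional exactly when $-1<p^*(x)<1$ for every $x$, that $p(x)=0$ forces $\deg_L(x)<2k$ when $p$ is proportional (both immediate from the definitions), and that Lemma~\ref{L:minimumDegsNew} applies since by Lemma~\ref{L:reduction} we may assume $(V,\A)$ is non-reducible.

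The first substantive step is to record the structure forced on $m$. In clause~(ii), $\deg_L(y)<k$ and $m(y)=1$; since $m^*(y)<1$ by Lemma~\ref{L:gammaStarProperties}(ii), in fact $1\leq\deg_L(y)\leq k-1$ and $m^*(y)>0$. By Lemma~\ref{L:minimumDegsNew}(i), $y$ is the unique vertex of $L$ of degree at most $k$, so $y$ is the only bad vertex under $m$; and if $w_1w_2$ were a bad edge then $w_1$ and $w_2$, both of degree less than $2k$, together with the further vertex $y$ of degree less than $2k$, would contradict Lemma~\ref{L:minimumDegsNew}(ii), so $m$ has no bad edge. In clause~(iii), $m(y_1)=m(y_2)=0$ gives $\deg_L(y_1),\deg_L(y_2)<2k$, so by Lemma~\ref{L:minimumDegsNew}(ii) every vertex other than $y_1,y_2$ has degree at least $2k$ and hence $m$-value at least $1$; thus $y_1$ and $y_2$ are the only vertices of $m$-value $0$, so $m$ has no other bad edge, $m$ has no bad vertex (a bad vertex has $m$-value $1$), and $\deg_L(y_2)>k$ (else $y_1$ and $y_2$ would both have degree at most $k$, contradicting Lemma~\ref{L:minimumDegsNew}(i)).

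Given this, proportionality and the absence of bad vertices follow quickly. In clause~(ii) the only changed values are $s^*(y)=m^*(y)-1=-\tfrac{1}{2k}\deg_L(y)\in(-1,0)$ and $s^*(z)=m^*(z)+1$, where $z\neq y$ has $m^*(z)=\min_x m^*(x)$, which is negative (since $m^*(V)=0$ and $m^*(y)>0$) and exceeds $-1$; so $s$ is proportional. Also $s(y)=0$ is not a bad value, $z$ is not bad because $\deg_L(z)>k$ by Lemma~\ref{L:minimumDegsNew}(i) (as $z\neq y$), and the remaining vertices keep their non-bad $m$-values; hence no vertex is bad under $s$. Clause~(iii) is symmetric: the changed values $s^*(y_2)=1-\tfrac{1}{2k}\deg_L(y_2)\in(0,1)$ and $s^*(z)=m^*(z)-1$, where $z$ has $m^*(z)=\max_x m^*(x)>0$ (as $m^*(V)=0$ and $m^*(y_2)<0$, which also gives $m(z)\geq1$ and $z\notin\{y_1,y_2\}$), keep $s$ proportional; $s(y_2)=1$ with $\deg_L(y_2)>k$ is not bad, $\deg_L(z)\geq2k$ so $z$ is not bad, and no other vertex changes.

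It remains to show $s$ has no bad edge, which I expect to be the main obstacle. In clause~(ii), since $y$ and $z$ are the only vertices whose value changes and $s(z)=m(z)+1\geq1$, a bad edge under $s$ could only be an edge $yw$ with $m(w)=0$, and I claim no such edge exists. Suppose $w$ is an $L$-neighbour of $y$ with $m(w)=0$. Then $\deg_L(w)<2k$; moreover $\deg_L(w)>k$, for otherwise $y$ and $w$ would be two distinct vertices of degree at most $k$, contradicting Lemma~\ref{L:minimumDegsNew}(i); so $\deg_L(w)>\deg_L(y)$. Now the $k$-precentral function $p$ with $p(y)=0$, $p(w)=1$, and $p=m$ elsewhere is proportional, and since only the terms at $y$ and $w$ change it satisfies $\sum_x|p^*(x)|=\sum_x|m^*(x)|-\tfrac{1}{k}\bigl(\deg_L(w)-\deg_L(y)\bigr)<\sum_x|m^*(x)|$, contradicting the minimality of $m$. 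In clause~(iii), after the repair the only vertices of $s$-value $0$ lie in $\{y_1,z\}$; but $z\notin\{y_1,y_2\}$ has $\deg_L(z)\geq2k$, so $m(z)=m^*(z)+\tfrac{1}{2k}\deg_L(z)>1$, hence $m(z)\geq2$ and $s(z)=m(z)-1\geq1$; thus $y_1$ is the only vertex of $s$-value $0$ and $s$ has no bad edge. The exchange argument in clause~(ii) is the one place where the degree bounds of Lemma~\ref{L:minimumDegsNew} and the minimality of $m$ must be played against each other; the remainder is bookkeeping with Lemma~\ref{L:gammaStarProperties}.
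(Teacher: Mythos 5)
Your proof is correct and follows essentially the same route as the paper: it splits on the two repair cases, uses Lemma~\ref{L:minimumDegsNew} for the degree bounds, and checks proportionality and the absence of bad vertices and edges directly via Lemma~\ref{L:gammaStarProperties}. The only local difference is in the repaired-vertex case, where you exclude a potential bad edge $yw$ by an explicit value-swap contradicting minimality (in effect re-proving the relevant instance of Lemma~\ref{L:diffAtMost1}), whereas the paper invokes Lemma~\ref{L:diffAtMost1} to show $s(x)\geq 1$ for every $x \neq y$; both arguments are sound, and your tacit reliance on non-reducibility (and $k$-admissibility) to apply Lemma~\ref{L:minimumDegsNew} mirrors the paper's own proof.
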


\begin{proof}
If $s=m$ for a minimal $k$-precentral function $m$ for $L$ under which no vertex or edge is bad, then the result holds trivially, so we may suppose otherwise. We will make repeated use of the fact that a proportional $k$-precentral function does not assign 0 to any vertex of degree at least $2k$.

Suppose that $s$ is obtained from a minimal $k$-precentral function $m$ for $L$ by repairing vertex $y$. Then $s(y)=m(y)-1=0$, $s(z)=m(z)+1$ for some vertex $z$ such that $m^*(z)=\min\{m^*(x): x \in V\}$, and $s(x)=m(x)$ for each $x \in V \setminus \{y,z\}$. It can be seen that $s$ is proportional because $m$ is proportional, $m^*(y) > 0$ and, by Lemma~\ref{L:gammaStarProperties}(i), $m^*(z) < 0$. By Lemma~\ref{L:minimumDegsNew}(i), for each $x \in V \setminus \{y\}$ we have that $\deg_L(x) \geq k+1$ and hence $s$ is not bad on any vertex. We claim that $s(x) \geq 1$ for each $x \in V \setminus \{y\}$ and hence that $s$ is not bad on any edge. If $\deg_L(x) \geq 2k$, this follows because $s$ is proportional and, if $k+1 \leq \deg_L(x) < 2k$, then $s(x) \geq m(x)=1$ since otherwise we would have $m^*(x) < -\frac{1}{2}$ and $m^*(y)>\frac{1}{2}$ contradicting Lemma~\ref{L:diffAtMost1}.

Suppose that $s$ is obtained from a minimal $k$-precentral function $m$ for $L$ by repairing edge $y_1y_2$ at $y_2$. Then $s(y_2)=m(y_2)+1=1$, $s(z)=m(z)-1$ for some vertex $z \in V$ such that $m^*(z)=\max\{m^*(x): x \in V\}$, and $s(x)=m(x)$ for each $x \in V \setminus \{y_2,z\}$. It can be seen that $s$ is proportional because $m$ is proportional, $m^*(y_2) < 0$ and, by Lemma~\ref{L:gammaStarProperties}(i), $m^*(z) > 0$. Now $\deg_L(y_1) \leq \deg_L(y_2) < 2k$ and hence, by Lemma~\ref{L:minimumDegsNew}, $\deg_L(y_2) \geq k+1$ and $\deg_L(x) \geq 2k$ for each $x \in V(L) \setminus \{y_1,y_2\}$. Noting $s(y_1)=m(y_1)=0$, it follows that $s$ is not bad at any vertex of $L$. Further, since $s$ is proportional, $s(x) \geq 1$ for each $x \in V \setminus \{y_1\}$ and so $s$ is not bad on any edge of $L$.
\end{proof}

Finally in this section, we establish upper bounds on the sum of the values of a suitable $k$-precentral function over a subset of the vertices in a graph. This will be vital to showing that our suitable $k$-precentral functions are in fact realisable.

\begin{lemma}\label{L:gammaStarSum}
Let $p$ be a $k$-precentral function for a $n$-vertex graph $G$ on vertex set $V$, let $T$ be a nonempty proper subset of $V$, and let $t=|T|$.
\begin{itemize}
    \item[\textup{(i)}]
If $p$ is minimal, then $p^*(T) \leq \frac{t(n-t)}{n}$.
    \item[\textup{(ii)}]
If $p$ is suitable, then
\[p^*(T) \leq
\begin{cases}
\frac{t(2n-2t-1)}{2(n-1)}\quad   &\mbox{if $t < \frac{n}{2}$}\\
\frac{(2t-1)(n-t)}{2(n-1)} &\mbox{if $t \geq \frac{n}{2}$.}
\end{cases}
\]
\end{itemize}
\end{lemma}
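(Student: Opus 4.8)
\textbf{Proof proposal for Lemma~\ref{L:gammaStarSum}.}

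The plan is to prove (i) first and then derive (ii) from it by accounting for the small perturbation that turns a minimal $k$-precentral function into a suitable one. For part (i), I would start from the two basic facts about $p^*$ recorded in Lemma~\ref{L:gammaStarProperties} and Lemma~\ref{L:diffAtMost1}: namely $p^*(V)=0$, and $p^*(y)-p^*(z)\le 1$ for all $y,z\in V$. Writing $V\setminus T$ for the complement, we have $p^*(V\setminus T)=-p^*(T)$, so
\[
p^*(T)-\frac{p^*(T)}{?}
\]
is not quite the right bookkeeping; instead the clean way is: let $\alpha=\frac{1}{t}p^*(T)$ be the average of $p^*$ over $T$ and $\beta=\frac{1}{n-t}p^*(V\setminus T)=-\frac{1}{n-t}p^*(T)$ the average over $V\setminus T$. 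Since every value in $T$ is at most $\max_{x}p^*(x)$ and every value in $V\setminus T$ is at least $\min_{x}p^*(x)$, and these two extremes differ by at most $1$ by Lemma~\ref{L:diffAtMost1}, we get $\alpha-\beta\le 1$, i.e. $\frac{p^*(T)}{t}+\frac{p^*(T)}{n-t}\le 1$, which rearranges to exactly $p^*(T)\le \frac{t(n-t)}{n}$. So (i) is essentially immediate from the two cited lemmas.

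For part (ii), the point is that a suitable $p$ equals a minimal $m$ except at two vertices where it differs by $\pm1$, and we must bound $p^*(T)=m^*(T)+\big(p^*-m^*\big)(T)$. The correction term $\big(p^*-m^*\big)(T)$ is $0$, $+1$, or $-1$ depending on which of the two modified vertices lie in $T$, so naively $p^*(T)\le m^*(T)+1\le \frac{t(n-t)}{n}+1$, which is weaker than claimed. To get the stated bound I would instead argue more carefully using where the modification happens. In case (ii) of the definition of suitable (repairing a bad vertex $y$), we have $s(y)=0$ while $s(z)=m(z)+1$; the only way $\big(s^*-m^*\big)(T)=+1$ is if $z\in T$ and $y\notin T$. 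But $z$ was chosen with $m^*(z)=\min_x m^*(x)$, so $m^*(z)$ is as small as possible; combined with $p^*(V)=0$ this forces $m^*(T)$ to be correspondingly smaller, and one can rerun the averaging argument of part (i) with the improved information that $T$ contains the globally minimal value of $m^*$. The analogous reasoning handles case (iii) (repairing a bad edge at $y_2$): there $\big(s^*-m^*\big)(T)=-1$ unless $y_2\in T,z\notin T$, and again $z$ realises the maximum of $m^*$. I expect the cleanest route is: treat the three cases (i)/(ii)/(iii) of suitability separately, and in the latter two cases split further on whether the two special vertices lie in $T$ or not, in each sub-case re-deriving the inequality $\frac{p^*(T)}{t}+\frac{p^*(T)}{n-t}\le 1$ but with either $t$ or $n-t$ replaced by $t-1$ or $n-t-1$ (because one vertex of $T$, or of its complement, has a value pinned to an extreme rather than merely within $1$ of it). That replacement is precisely what produces the denominators $2(n-1)$ and the two-case split on whether $t<\tfrac n2$ or $t\ge\tfrac n2$.

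\textbf{Main obstacle.} The genuinely delicate part is the casework in (ii): getting the exactly-stated constants requires being careful about which vertex's value is forced to an extreme and translating that into the right modification of the averaging bound, rather than just losing an additive $1$. In particular one has to notice that the bad vertex $y$ (case (ii)) has $\deg_G(y)<k$ so $m^*(y)$ is bounded away from the minimum — hence if $z\in T$ then $y\notin T$ automatically cannot be assumed, and one must instead exploit that $m^*(z)$ being the minimum means $m^*(z)\le \frac{1}{n}m^*(V)=0$ and more precisely $m^*(z)\le \frac{1}{n-t+1}m^*((V\setminus T)\cup\{z\})$-type estimates. A secondary subtlety is the boundary value $t=\frac n2$ and checking the two branches agree there, and confirming the bound is vacuous or trivially true for the extreme values $t=1$ and $t=n-1$. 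I would organise the write-up so that part (i) is a three-line averaging argument, and then present (ii) as: ``If $p$ is minimal we are in case (i) and the bound follows from part (i) since $\frac{t(n-t)}{n}\le$ the claimed quantity'' (checking this numerical inequality), ``otherwise $p$ is obtained by repairing a vertex or an edge, and we handle each case by the modified averaging argument above.''
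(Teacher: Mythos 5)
Your part (i) is correct and is essentially the paper's own argument (the paper runs the same averaging via Lemma~\ref{L:diffAtMost1}, phrased as a contradiction), and your reduction of part (ii) — if $s$ is minimal use (i) plus the numerical comparison, otherwise split into the two repair cases and note that only the membership configuration giving $(s^*-m^*)(T)=+1$ is problematic — also matches the paper in outline. The gap is in that one problematic subcase: the information you actually propose to use there is not enough to reach the stated constants. Take the vertex-repair case with $z\in T$, $y\notin T$, and let $\ell=m^*(z)=\min_x m^*(x)$. Your estimate ``$m^*(z)\le\frac{1}{n-t+1}m^*((V\setminus T)\cup\{z\})$'' together with Lemma~\ref{L:diffAtMost1} yields exactly $s^*(T)\le t(\ell+1)$ and $s^*(T)\le 1-(n-t)\ell$, and optimising over $\ell$ gives only $s^*(T)\le \frac{t(n-t+1)}{n}$, which is strictly larger than the claimed $\frac{t(2n-2t-1)}{2(n-1)}=\frac{t(n-t-\frac12)}{n-1}$. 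Likewise your ``one vertex is pinned to an extreme, so replace $t$ by $t-1$'' heuristic does not apply: the repaired vertex is displaced by $1$ outside the range of $m^*$, not pinned at its boundary, and the bound $\frac{(t-1)(n-t)}{n-1}$ it would produce is not available.

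What actually produces the halves in the numerators ($2n-2t-1=2(n-t-\tfrac12)$ and $2t-1=2(t-\tfrac12)$) is a degree-based bound $|s^*(w)|<\tfrac12$ at the repaired vertex $w$. In the vertex case this is $s^*(y)=-\tfrac{1}{2k}\deg_G(y)>-\tfrac12$, coming from $\deg_G(y)\le k-1$; you mention this degree fact but never convert it into the estimate (adding it to your averaging, i.e.\ $m^*(V\setminus T)\ge m^*(y)+(n-t-1)\ell>\tfrac12+(n-t-1)\ell$, is precisely what recovers the stated bound). In the edge case the needed fact is $s^*(y_2)=1-\tfrac{1}{2k}\deg_G(y_2)<\tfrac12$, i.e.\ $\deg_G(y_2)\ge k+1$, and this is not automatic from the definitions at all: the paper obtains it from Lemma~\ref{L:minimumDegsNew}(i) (at most one vertex of the leftover has degree at most $k$, and $\deg_G(y_1)\le\deg_G(y_2)$), so the lemma is really being applied to the leftover of a non-reducible partial design rather than an arbitrary graph. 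Your sketch says nothing about this, so the edge-repair case is genuinely unproved. For comparison, the paper's proof avoids your membership casework entirely: it establishes the two linear bounds $s^*(T)\le t(\ell+1)$ and $s^*(T)\le-(n-t-1)\ell-s^*(y)$ (resp.\ the analogues in $g=\max_x m^*(x)$), valid whether or not $y$ lies in $T$, and takes their worst-case crossing $\frac{t(n-t-1-s^*(y))}{n-1}$, after which $|s^*(y)|<\tfrac12$ gives the claim.
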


\begin{proof}
Let $V=V(G)$ and $t=|T|$. We first prove (i). Suppose for a contradiction that $p$ is minimal and $p^*(T) > \frac{t(n-t)}{n}$. By Lemma~\ref{L:gammaStarProperties}(i), we also have $p^*(V \setminus T) < -\frac{t(n-t)}{n}$. But then there must exist $y \in T$ with $p^*(y)>\frac{n-t}{n}$ and $z \in V\setminus T$ with $p^*(z)<-\frac{t}{n}$, contradicting Lemma~\ref{L:diffAtMost1}.

We now prove (ii). We do not retain any notation from our proof of (i). Suppose that $p$ is suitable. If $p$ is minimal, then the result follows by (i), noting that $\frac{t(n-t)}{n} < \frac{t(2n-2t-1)}{2(n-1)}$ when $0 < t < \frac{n}{2}$ and $\frac{t(n-t)}{n}\leq \frac{(2t-1)(n-t)}{2(n-1)}$ when $\frac{n}{2} \leq t < n$. (To see this, note that $\frac{t(2n-2t-1)}{2(n-1)}-\frac{t(n-t)}{n}=\frac{t(n-2t)}{2n(n-1)}$ and $\frac{(2t-1)(n-t)}{2(n-1)}-\frac{t(n-t)}{n}=\frac{(n-t)(2t-n)}{2n(n-1)}$.) So we may assume that $p$ is not minimal. Thus, by the definition of suitable, we must be in one of the following two cases.\smallskip

\noindent \textbf{Case 1.} Suppose that $p$ is obtained from a minimal $k$-precentral function $m$ for $G$ by repairing vertex $y$. Then $\deg_G(y) \leq k-1$, $p(y)=m(y)-1=0$, $p(z)=m(z)+1$ for some vertex $z$ such that $m^*(z)=\min\{m^*(x): x \in V\}$, and $m(x)=p(x)$ for each $x \in V(G) \setminus \{y,z\}$. Let $\ell=m^*(z)$ and note that $-1< \ell <0$ by Lemma~\ref{L:gammaStarProperties}(i) since $m^*(y)$ is positive and that $p^*(z)=\ell+1$.

Since $m$ is minimal, by Lemma~\ref{L:diffAtMost1} we have $\ell \leq m^*(x) \leq \ell+1$ for each $x \in V$. Thus $p^*(x) \leq \ell+1$ for each $x \in V$ and, in particular,
\begin{equation}\label{E:tBound1}
p^*(T) \leq t(\ell+1).
\end{equation}
Further, $p^*(y)= m^*(y)-1 \leq \ell$ and $p^*(x) \geq \ell$ for each $x \in V \setminus \{y\}$. Thus, $p^*(V\setminus T) \geq (n-t-1)\ell+p^*(y)$ and hence, using Lemma~\ref{L:gammaStarProperties}(i),
\begin{equation}\label{E:tBound2}
p^*(T) \leq -(n-t-1)\ell-p^*(y).
\end{equation}
Now,
\[p^*(T) \leq \mfrac{t(n-t-1-p^*(y))}{(n-1)} < \mfrac{t(2n-2t-1)}{2(n-1)}\]
where the first inequality follows from \eqref{E:tBound1} if $\ell \leq -\frac{1}{n-1}(t+p^*(y))$ and from \eqref{E:tBound2} otherwise, and the second follows using $p^*(y)=-\frac{1}{2k}\deg_G(y)>-\frac{1}{2}$. This completes the proof because $\frac{t(2n-2t-1)}{2(n-1)} \leq \frac{(2t-1)(n-t)}{2(n-1)}$  when $t \geq \frac{n}{2}$. \smallskip

\noindent \textbf{Case 2.} Suppose that $p$ is obtained from a minimal $k$-precentral function $m$ for $G$ by repairing edge $y_1y_2$ at $y_2$. Then $p(y_2)=m(y_2)+1=1$, $p(z)=m(z)-1$ for some vertex $z \in V$ such that $m^*(z)=\max\{m^*(x): x \in V\}$, and $p(x)=m(x)$ for each $x \in V \setminus \{y_2,z\}$. Note $\deg_L(y_1) \leq \deg_L(y_2)$ and hence, by Lemma~\ref{L:minimumDegsNew}(i), $\deg_L(y_2) \geq k+1$. Let $g=m^*(z)$ and note that $0< g < 1$ by Lemma~\ref{L:gammaStarProperties} since $m^*(y_2)$ is negative and that $p^*(z)=g-1$.

Since $m$ is minimal, by Lemma~\ref{L:diffAtMost1} we have $g-1 \leq m^*(x) \leq g$ for each $x \in V$. Thus $p^*(x) \geq g-1$ for each $x \in V$, so $p^*(V\setminus T) \geq (n-t)(g-1)$ and, using Lemma~\ref{L:gammaStarProperties}(i),
\begin{equation}\label{E:tBound1Case2}
p^*(T) \leq (n-t)(1-g).
\end{equation}
Further, $p^*(y_2)= m^*(y_2)+1 \geq g$ and $p^*(x) \leq g$ for each $x \in V \setminus \{y_2\}$. Thus,
\begin{equation}\label{E:tBound2Case2}
p^*(T) \leq (t-1)g+p^*(y_2).
\end{equation}
Now,
\[p^*(T) \leq \mfrac{(n-t)(t-1+p^*(y_2))}{(n-1)} < \mfrac{(2t-1)(n-t)}{2(n-1)}\]
where the first inequality follows from \eqref{E:tBound1Case2} if $g \geq \frac{1}{n-1}(n-t-p^*(y_2))$ and from \eqref{E:tBound2Case2} otherwise, and the second follows using $p^*(y_2)=1-\frac{1}{2k}\deg_G(y_2) < \frac{1}{2}$. This completes the proof because $\frac{(2t-1)(n-t)}{2(n-1)} < \frac{t(2n-2t-1)}{2(n-1)}$  when $t < \frac{n}{2}$.
\end{proof}

\section{Realising suitable precentral functions}\label{S:realising}

In this section we complete the proof of Theorem~\ref{T: main theorem} by showing that a suitable $k$-precentral function on the leftover $L$ of one of the relevant partial designs is realisable. In view of Lemma~\ref{L:realisation}, we do this by taking a nonempty proper subset $T$ of $V(L)$ and showing that $\Delta_T \geq 0$ by dividing into cases according to the size of $T$. Our next lemma covers the case when $T$ is neither very large nor very small. We then deal with very large $T$ in Lemma~\ref{L:largeT} and very small $T$ in Lemma~\ref{L:smallT}.

\begin{lemma}\label{L:midrangeT}
Let $k \geq 3$ and $n > 3k+1$ be integers such that $n$ is $k$-admissible. Let $(V,\A)$ be a non-reducible partial $k$-star design of order $n$ with $u(n,k)$ stars, let $L$ be the leftover of $(V,\A)$, and let $s$ be a suitable $k$-precentral function for $L$. If $T$ is a subset of $V(L)$ with $5 \leq |T| \leq n-5$, then $\Delta_T \geq 0$.
\end{lemma}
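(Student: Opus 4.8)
The plan is to verify the hypothesis of Lemma~\ref{L:realisation} for the set $T$, namely that $\Delta_T\geq0$. Write $t=|T|$ and let $e$ denote the number of edges of $L$ with exactly one endpoint in $T$. Since $s$ is suitable, it is proportional by Lemma~\ref{L:suitableIsGood}, so $s(x)=\tfrac{1}{2k}\deg_L(x)+s^*(x)$ for each $x\in V(L)$. Splitting the edges of $L$ incident with $T$ into those inside $T$ and those crossing the cut gives $\Delta^+_T=|E(L[T])|+e$, and since $\sum_{x\in T}\deg_L(x)=2|E(L[T])|+e$ we get $\Delta^-_T=ks(T)=\tfrac{1}{2}\sum_{x\in T}\deg_L(x)+ks^*(T)=|E(L[T])|+\tfrac{1}{2}e+ks^*(T)$. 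The $|E(L[T])|$ terms cancel on subtraction, leaving the identity $\Delta_T=\tfrac{1}{2}e-ks^*(T)$. Now $e=t(n-t)-e'$, where $e'$ is the number of edges of $\overline{L}$ with exactly one endpoint in $T$, and $e'\leq|E(\overline{L})|=k\,u(n,k)$ since $\overline{L}$ is a union of $u(n,k)$ edge-disjoint $k$-stars; hence $e\geq t(n-t)-k\,u(n,k)$ and it suffices to prove $t(n-t)-k\,u(n,k)\geq 2ks^*(T)$.

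To control $s^*(T)$ I would apply Lemma~\ref{L:gammaStarSum}(ii). Setting $m=\min(t,n-t)$, the two upper bounds displayed in that lemma are interchanged by $t\mapsto n-t$, so in either case $s^*(T)\leq\frac{m(2n-2m-1)}{2(n-1)}$. Substituting this and $t(n-t)=m(n-m)$, clearing the positive factor $2(n-1)$, and using $k(2n-2m-1)=2k(n-m)-k$, the inequality to be proved becomes
\[
m\bigl[(n-m)(n-1-2k)+k\bigr]\geq k\,u(n,k)\,(n-1).
\]
Because $n>3k+1$ we have $n-1-2k>0$, so the left-hand side equals $(n-1-2k)\,m(n-m)+km$, a sum of two functions of $m$ each nondecreasing on $[0,n/2]$; it is therefore nondecreasing in $m$ over $\{5,\dots,\lfloor n/2\rfloor\}$, and it is enough to verify the inequality at $m=5$.

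The remaining task, and the main obstacle, is this $m=5$ inequality, which I would settle using the explicit value of $u(n,k)$ from \eqref{E:uDefab}. If $n\equiv1\mod{k}$ then $k\,u(n,k)=2(n-1-k)$, and since $n>3k+1$ we in fact have $n\geq4k+1$; the inequality then reduces to a quadratic in $n$ that is positive at $n=4k+1$ (its value there is $k(16k-35)$, positive since $k\geq3$) and increasing for $n\geq4k+1$. If $n\not\equiv1\mod{k}$, write $n=ak+b$ with $b\in\{3,\dots,k\}$; then $a\geq3$, so $n\geq3k+3$, and $k\,u(n,k)=2n-2b-k\leq2n-6-k$, so it is enough to prove the inequality with $k\,u(n,k)$ replaced by $2n-6-k$, which is again a quadratic in $n$, positive at $n=3k+3$ (value $15k-20$) and increasing for $n\geq3k+3$. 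Both of these are short computations. The delicate point is that the only information about $\overline{L}$ used is the crude bound $e'\leq|E(\overline{L})|$, and together with Lemma~\ref{L:gammaStarSum}(ii) this is only barely sufficient: when $|T|=5$ and $n$ is as small as $4k+1$ or $3k+3$, the surplus in the inequality is only linear in $k$, so one cannot afford to bound $u(n,k)$ uniformly but must use its exact value and handle the residues $b=1$ and $b\in\{3,\dots,k\}$ separately. For $|T|\leq4$ or $|T|\geq n-4$ the argument fails entirely, which is why the lemma is confined to $5\leq|T|\leq n-5$; those extreme ranges, where one must use more of the structure of $L$ given by Lemma~\ref{L:minimumDegsNew}, are treated in Lemmas~\ref{L:largeT} and~\ref{L:smallT}.
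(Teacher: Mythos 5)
Your proposal is correct and follows essentially the same route as the paper: the identity $\Delta_T=\tfrac{1}{2}c-ks^*(T)$ (your $e$ is the paper's $c$), the crossing-edge bound $c\geq t(n-t)-ku(n,k)$, and Lemma~\ref{L:gammaStarSum}(ii) are exactly the paper's ingredients, and your endgame checks out (monotonicity in $m=\min(t,n-t)$, reduction to $m=5$, and the residue-split quadratics with values $k(16k-35)$ at $n=4k+1$ and $15k-20$ at $n=3k+3$ are all correct). The only divergence is how the final inequality is closed: the paper uses the uniform bounds $ku(n,k)\leq 2n-k-5$ and $k<\tfrac{n-1}{3}$ together with the integrality of $\Delta_T$, rather than your exact residue-dependent values of $u(n,k)$.
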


\begin{proof}
Let $u=u(n,k)$, 
let $t=|T|$, let $e$ be the number of edges incident with two vertices in $T$, and let $c$ be the number of edges incident with exactly one vertex in $T$. Then we have $\Delta^+_T = e+c$ and $\sum_{x \in T} \deg_L(x) = 2e+c$. Using these, together with the fact that $s(x) = \frac{1}{2k}\deg_L(x)+s^*(x)$ for each $x \in V$, we obtain
  \begin{equation} \label{E: deltaEq}
     \Delta_T = e+c-ks(T) = e+c-\left(\mfrac{1}{2}\sum_{x \in T}\deg_L(x)+ ks^*(T)\right) = \mfrac{c}{2} - ks^*(T).
 \end{equation}
Also,
\begin{equation} \label{E: cbound}
    c \geq t(n-t)-ku \geq t(n-t)-(2n-k-5)
\end{equation}
where for the final inequality we used the definition of $u$ and, in the case $n \equiv 1 \mod{k}$, $k \geq 3$. We consider two cases according to the value of $t$.\smallskip

\noindent\textbf{Case 1.} Suppose that $t \geq \frac{n}{2}$. Then $s^*(T) \leq \frac{(2t-1)(n-t)}{2(n-1)}$ by Lemma~\ref{L:gammaStarSum}(ii). Using this and \eqref{E: cbound} in \eqref{E: deltaEq}, we obtain
\begin{equation}\label{E:DeltaGenBigt}
        \Delta_T  \geq \mfrac{c}{2}-\mfrac{k(2t-1)(n-t)}{2(n-1)} \geq \mfrac{t(n-t)+5-2n}{2}-\mfrac{k}{2}\left(\mfrac{(2t-1)(n-t)}{n-1}-1\right).
\end{equation}
The coefficient of $k$ in \eqref{E:DeltaGenBigt} is nonpositive since $(2t-1)(n-t) \geq n-1$ using $\frac{n}{2} \leq t \leq n-5$ and $n \geq 11$.
So we can substitute $k < \frac{n-1}{3}$ into \eqref{E:DeltaGenBigt} to obtain
\[\Delta_T > \mfrac{(t-4)(n-t-5)}{6}-1 \geq -1.\]
Thus $\Delta_T$ is nonnegative, since it is an integer.
\smallskip

\noindent\textbf{Case 2.} Suppose that $t <  \frac{n}{2}$. Then $s^*(T) \leq \frac{t(2n-2t-1)}{2(n-1)}$ by Lemma~\ref{L:gammaStarSum}(ii). Using this and \eqref{E: cbound} in \eqref{E: deltaEq}, we obtain
\begin{equation}\label{E:DeltaGenSmallt}
    \Delta_T  \geq \mfrac{c}{2}-\mfrac{tk(2n-2t-1)}{2(n-1)} \geq \mfrac{t(n-t)+5-2n}{2}-\mfrac{k}{2}\left(\mfrac{t(2n-2t-1)}{n-1}-1\right).
\end{equation}
The coefficient of $k$ in \eqref{E:DeltaGenSmallt} is nonpositive since $t(2n-2t-1) \geq n-1$ using $5 \leq t < \frac{n}{2}$ and $n \geq 11$.
So we can substitute $k < \frac{n-1}{3}$ into \eqref{E:DeltaGenSmallt} and simplify to obtain
\[\Delta_T > \mfrac{(t-5)(n-t-4)}{6}-1 \geq -1.\]
Thus $\Delta_T$ is nonnegative, since it is an integer.
\end{proof}

\begin{lemma} \label{L:largeT}
Let $k \geq 3$ and $n > 3k+1$ be integers. Let $(V,\A)$ be a partial $k$-star design of order $n$ with $u(n,k)$ stars, let $L$ be its leftover, and let $s$ be a suitable $k$-precentral function for $L$. If $T$ is a proper subset of $V$ with $|T| \geq n-4$, then $\Delta_T \geq 0$.
\end{lemma}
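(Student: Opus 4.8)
The strategy is the same mould as Lemma~\ref{L:midrangeT}: we want to show $\Delta_T = \frac{c}{2} - ks^*(T) \geq 0$ using \eqref{E: deltaEq}, where $c$ is the number of edges of $L$ with exactly one endpoint in $T$. The difference now is that $T$ is nearly all of $V$, so $V \setminus T$ is tiny (size at most $4$), and it is cleaner to work with $R := V \setminus T$ directly. Writing $r = |R| \in \{1,2,3,4\}$ and noting $s^*(T) = -s^*(R)$ by Lemma~\ref{L:gammaStarProperties}(i), the target inequality becomes $\frac{c}{2} + ks^*(R) \geq 0$, i.e.\ $\Delta_T = \frac{c}{2} + ks^*(R)$. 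The key point is that $c$ is exactly the number of edges of $L$ joining $R$ to $T$, so $c = \sum_{x \in R}\deg_L(x) - 2|E(L[R])| \geq \sum_{x\in R}\deg_L(x) - r(r-1)$.

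**Key steps.** First I would bound $ks^*(R)$ from below: since $s$ is suitable hence proportional (Lemma~\ref{L:suitableIsGood}), $s^*(x) = s(x) - \frac{1}{2k}\deg_L(x) \geq -\frac{1}{2k}\deg_L(x)$ for every $x$ (as $s(x)\geq 0$), so $ks^*(R) \geq -\frac12\sum_{x\in R}\deg_L(x)$. Combining this with the bound on $c$ gives $\Delta_T = \frac{c}{2}+ks^*(R) \geq \frac12\bigl(\sum_{x\in R}\deg_L(x) - r(r-1)\bigr) - \frac12\sum_{x\in R}\deg_L(x) = -\frac{r(r-1)}{2}$, which is not quite good enough for $r \geq 2$. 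So the bound on $ks^*(R)$ must be sharpened. The honest bound is $ks^*(R) \geq ks(R) - \frac12\sum_{x\in R}\deg_L(x)$; since $s(R) \geq 0$ we recover the weak bound, but we can do better by instead writing $\Delta_T^+ = \Delta_T^+$ directly: $\Delta_T^+$ is the number of edges of $L$ meeting $T$, which is $|E(L)| - |E(L[R])| \geq |E(L)| - \binom{r}{2}$, while $\Delta_T^- = ks(T) = |E(L)| - ks(R)$. Hence $\Delta_T \geq ks(R) - \binom{r}{2}$. Now if $s(R) \geq 1$ we are immediately done for $r \in \{1,2\}$ since $ks(R) \geq k \geq 3 > \binom{r}{2}$, and for $r=3$ provided $s(R)\geq 1$ and $k \geq 3$, and for $r=4$ provided $ks(R) \geq 6$.

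**The main obstacle: the case $s(R) = 0$.** The difficulty is entirely concentrated in the subcase where $s(x) = 0$ for every $x \in R$. By Lemma~\ref{L:suitableIsGood}, $s$ is not bad on any vertex or edge of $L$, so $s(x)=0$ forces $\deg_L(x) \geq k$ (else $x$ would be bad as a vertex), in fact $\deg_L(x) \geq k+1$ would follow unless... more carefully: no bad vertex means we cannot have $\deg_L(x) < k$ and $s(x)=1$, which does not directly forbid $\deg_L(x) < k$ with $s(x)=0$; but proportionality gives $s(x) \in \{\lfloor\frac{1}{2k}\deg_L(x)\rfloor, \lceil\cdot\rceil\}$, so $s(x) = 0$ is consistent with any $\deg_L(x) \leq 2k-1$. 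The real leverage is "no bad edge": if $x,x' \in R$ are both adjacent in $L$ then $s(x)=s(x')=0$ makes $xx'$ bad, contradiction. Therefore when $s(R)=0$, the set $R$ is independent in $L$, so $|E(L[R])| = 0$ and $c = \sum_{x\in R}\deg_L(x)$, giving $\Delta_T = \frac{c}{2} + ks^*(R) = \frac12\sum_{x\in R}\deg_L(x) - \frac{1}{2}\sum_{x \in R}\deg_L(x) + (ks(R)) \cdot\!\!$—wait, that again collapses to $\Delta_T = ks(R) - |E(L[R])| = 0 - 0 = 0$. So $s(R)=0$ forces $R$ independent and then $\Delta_T = 0$ exactly, which suffices. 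I would present the argument in this order: (1) record $\Delta_T \geq ks(R) - \binom{r}{2}$; (2) if $s(R) \geq 1$, dispatch $r \in \{1,2,3\}$ immediately and handle $r=4$ by noting $s(R)\geq 1$ together with either $s(R)\geq 2$ or a finer count (using that when $r=4$ and $s(R)=1$, three vertices of $R$ have $s$-value $0$ hence form an independent set, improving the $|E(L[R])|$ bound from $6$ to $3$, so $\Delta_T \geq k - 3 \geq 0$); (3) if $s(R)=0$, observe $R$ is independent in $L$ by the no-bad-edge property of suitable functions, whence $|E(L[R])|=0$ and $\Delta_T = ks(R) = 0$. The one genuinely fiddly point is step (2) for $r=4$, $s(R)=1$, where one must combine the independence of the zero-valued vertices with the edge count; everything else is a one-line estimate.
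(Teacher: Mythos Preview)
Your approach is correct and essentially the same as the paper's: both derive the identity $\Delta_T = ks(V\setminus T) - |E(L[V\setminus T])|$ and then exploit the no-bad-edge property of suitable functions (Lemma~\ref{L:suitableIsGood}) to balance the two terms. The paper organises the case split by the value of $e = |E(L[V\setminus T])|$ rather than by $s(V\setminus T)$ --- observing that $e \in \{1,2,3\}$ forces $s(V\setminus T)\geq 1$, while $e \in \{4,5,6\}$ forces two nonadjacent edges in $L[V\setminus T]$ and hence $s(V\setminus T)\geq 2$ --- which makes the $r=4$ subcase marginally slicker, but the content is identical.
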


\begin{proof}
Let $e=|E(L[V \setminus T])|$ and note $e \leq 6$ since $|V \setminus T| \leq 4$. We have $\Delta^+_T=|E(L)|-e$. Since $ks(V)=|E(L)|$, we have $\Delta^-_T=|E(L)|-ks(V\setminus T)$. Thus $\Delta_T=ks(V\setminus T)-e$. This is obviously nonnegative if $e=0$. If $e \in \{1,2,3\}$, then $s(V\setminus T) \geq 1$ because no edge in $L[V \setminus T]$ is bad under $s$, and hence $\Delta_T$ is nonnegative. If $e \in \{4,5,6\}$, then $L[V \setminus T]$ must contain two nonadjacent edges and hence $s(V\setminus T) \geq 2$ because no edge in $L[V \setminus T]$ is bad under $s$. So again $\Delta_T$ is nonnegative.
\end{proof}

\begin{lemma}\label{L:smallT}
Let $k \geq 3$ and $n > 3k+1$ be integers. Let $(V,\A)$ be a non-reducible partial $k$-star design of order $n$ with $u(n,k)$ stars, let $L$ be its leftover, and let $s$ be a suitable $k$-precentral function for $L$. If $T$ is a nonempty subset of $V$ with $|T| \leq 4$, then $\Delta_T \geq 0$.
\end{lemma}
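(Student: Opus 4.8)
The plan is to follow the reductions of Lemmas~\ref{L:midrangeT} and~\ref{L:largeT}. Write $t=|T|$ and $e=|E(L[T])|$, and for each $x\in V$ set $\phi(x)=\deg_L(x)-ks(x)$. Counting the edges of $L$ incident with $T$ gives $\Delta^+_T=\sum_{x\in T}\deg_L(x)-e$, and since $\Delta^-_T=ks(T)$ we get
\[\Delta_T=\sum_{x\in T}\phi(x)-e .\]
So it suffices to prove $\sum_{x\in T}\phi(x)\geq e$; note $e\leq\binom{t}{2}\leq 6$.

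Next I would record the properties of $\phi$ coming from suitability. By Lemma~\ref{L:suitableIsGood}, $s$ is proportional and no vertex or edge of $L$ is bad under $s$. Hence $\phi(x)\geq 0$ for all $x$; a vertex with $k<\deg_L(x)<2k$ has $\phi(x)\geq\deg_L(x)-k\geq 1$, a vertex with $\deg_L(x)\geq 2k$ has $\phi(x)\geq 1$, and a vertex with $\deg_L(x)<k$ has $s(x)=0$, so $\phi(x)=0$ forces $\deg_L(x)\in\{0,k\}$. With Lemma~\ref{L:minimumDegsNew}(i) this shows at most one vertex of $L$, hence at most one vertex of $T$, has $\phi(x)=0$. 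A short further computation, using that proportionality forces $s(x)=\tfrac{1}{2k}\deg_L(x)$ whenever $2k\mid\deg_L(x)$, shows that $\phi(x)\leq 2$ together with $\deg_L(x)\geq 2k$ forces $\deg_L(x)\in\{2k+1,2k+2\}$. From these facts the cases $t\in\{1,2\}$ are immediate, since $e\leq 1$ and $\sum_{x\in T}\phi(x)\geq t-1$. For $t\in\{3,4\}$ the same bound $\sum_{x\in T}\phi(x)\geq t-1$ settles everything unless $T$ contains the unique $\phi$-zero vertex $x_0$; and if $x_0$ is isolated in $L[T]$ then again $\sum_{x\in T}\phi(x)\geq t-1\geq e$, so we may assume $x_0$ is non-isolated in $L[T]$, whence $\deg_L(x_0)=k$.

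This leaves the ``dense'' configurations: $\deg_L(x_0)=k$, $L[T]$ is dense, and every other vertex of $T$ has $\phi$-value at most $1$ (if $t=3$) or at most $2$ (if $t=4$), so by the computation above its $L$-degree lies in $\{k+1,2k+1\}$, respectively $\{k+1,k+2,2k+1,2k+2\}$. Since $x_0$ has degree $k<2k$ and is adjacent in $L$ to some other vertex of $T$ of degree below $2k$ — here one splits into sub-cases according to which of the $\binom{t}{2}$ edges lie in $L[T]$ — Lemma~\ref{L:minimumDegsNew}(ii) forces every vertex of $L$ outside that pair to have degree at least $2k$, so all vertices of $L$ of degree below $2k$ lie in $T$ with degrees now known explicitly. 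I would finish with a counting argument in $\Lc$. Let $c$ be the central function of $\A$, so $\sum_{z\in V}c(z)=u(n,k)$. The vertex $x_0$ has $\deg_{\Lc}(x_0)=n-1-k$, which is large relative to $u(n,k)$, so $x_0$ must be the centre of many stars of $\A$ \emph{and} a leaf of several stars; but $x_0$ is non-adjacent in $\Lc$ to each of its $L$-neighbours in $T$, so those leaf-stars are centred outside the $L$-clique of $T$ containing $x_0$, while the remaining vertices of $T$, which also have fairly large $\Lc$-degree, claim most of the remaining star centres. Bounding $c(x_0)$ and $\sum c$ over the clique from below, and the number of stars available to serve as leaf-stars of $x_0$ from above — using non-reducibility of $(V,\A)$ to exclude the degenerate case in which every star is centred inside the clique, which would make $\deg_{\Lc}(x_0)$ a multiple of $k$ and $(V,\A)$ reducible — forces $n\leq 3k+1$ (or an even smaller bound), contradicting $n>3k+1$.

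The step I expect to be the main obstacle is exactly this last one. The delicacy is twofold: the bound must be extracted from the individual degree equations of $\Lc$ at the vertices of $T$ rather than from their sum (summing is too lossy to yield a contradiction); and for $t=4$ there are several sub-cases to run according to which edges lie inside $T$, each changing which vertices of $T$ are $\Lc$-adjacent to $x_0$ and hence the bookkeeping of where the leaf-stars of $x_0$ can sit.
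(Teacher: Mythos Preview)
Your setup and the reduction $\Delta_T=\sum_{x\in T}\phi(x)-e$ match the paper exactly (the paper writes $\delta$ for your $\phi$), and the cases $t\in\{1,2\}$ are handled the same way. The divergence begins at $t\in\{3,4\}$, where your reduction to a single ``dense'' configuration has two genuine gaps.

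First, for $t=4$ the claim that the bound $\sum_{x\in T}\phi(x)\geq t-1$ ``settles everything unless $T$ contains $x_0$'' is false: if no vertex of $T$ has $\phi$-value zero you only get $\sum\phi\geq 4$, which does not cover $e\in\{5,6\}$. Similarly, once you are in the dense configuration with $x_0\in T$ and $\sum\phi\leq e-1\leq 5$, the other three vertices need only have $\phi\leq 3$ (not $\leq 2$), so $\deg_L(x)\in\{k+3,2k+3\}$ must also be allowed.

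Second, and more seriously, your invocation of Lemma~\ref{L:minimumDegsNew}(ii) requires $x_0$ to be $L$-adjacent to another vertex of $T$ with degree \emph{below} $2k$. Nothing in your reduction forces this: for $t=3$ the other two vertices of $T$ may both have $\deg_L=2k+1$, and for $t=4$ all three others may have $\deg_L\in\{2k+1,2k+2,2k+3\}$. In those sub-cases Lemma~\ref{L:minimumDegsNew}(ii) gives no constraint on the rest of $L$, and your counting-in-$\Lc$ plan --- which relies on knowing that all low-degree vertices of $L$ sit inside $T$ --- loses its leverage. The final star-counting sketch, even with non-reducibility, does not by itself force $n\leq 3k+1$ in these sub-cases.

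The paper closes exactly these gaps with two ingredients you do not invoke. It first proves a second lower bound $\Delta_T^+\geq t(n-1)-ku-\binom{t}{2}$, coming from $\sum_{x\in T}\deg_{\Lc}(x)\leq ku+\binom{t}{2}-e$; combined with an upper bound on $s(T)$ this forces the residual cases into $n=3k+b$ with $b$ very small --- this is what actually plays the role of your hoped-for ``forces $n\leq 3k+1$''. For $t=3$ the $\Lc$-counting argument (similar in spirit to yours) is then run only in that tightly restricted regime $u=5$. For $t=4$ the paper does \emph{not} count stars at all: after forcing $s(x)=2$ for each $x\in T$, it uses Lemma~\ref{L:gammaStarSum}(ii) to bound $\sum_{x\in T}s^*(x)$ and hence $\sum_{x\in T}\phi(x)$ from below directly. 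I would bring in both of these tools rather than trying to push the $\Lc$-counting through every sub-case.
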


\begin{proof}
Let $u=u(n,k)$ and let $c$ be the central function of $\A$. Say $n=ak+b$ where $a \geq 3$ and $b \in \{1\} \cup \{3,\ldots,k\}$. Note that $(a,b) \neq (3,1)$ since $n > 3k+1$. Let $t=|T|$ and $e=|E(L[T])|$. Note that a star centred at a vertex in $V \setminus T$ contributes at most $k$ toward $\sum_{x \in T}\deg_{\Lc}(x)$ and a star centred at a vertex in $T$ contributes $k+i$, where $i$ is the number of leaves it has in $T$. Thus $\sum_{x \in T}\deg_{\Lc}(x) \leq ku+\binom{t}{2}-e$ and we have $\sum_{x\in T}\deg_L(x) \geq t(n-1)-ku-\binom{t}{2}+e$. So $\Delta^+_T \geq t(n-1)-ku-\binom{t}{2}$ and hence $\Delta_T \geq t(n-1)-ku-\tbinom{t}{2}-ks(T)$. We may assume that the right hand side of this inequality is negative for otherwise we are finished and hence, using $n=ak+b$ and \eqref{E:uDefab} we have
\begin{align}\label{E:SmallDeltaTbound}
    \mbinom{t}{2} >
\begin{cases}
    k(a(t-2)+2-s(T))& \text{if $b=1$,}\\
    k(a(t-2)+1-s(T))+t(b-1)& \text{if $b \geq 3$.}
\end{cases}
\end{align}

For each $x \in V$, let $\delta(x)= \deg_{L}(x)-ks(x)$. Since $s$ is proportional and no vertex is bad under $s$ we have
\begin{equation}\label{E:littleDeltaBounds}
\delta(x) \geq
\begin{cases}
    \deg_{L}(x) & \text{if $\deg_{L}(x) \in \{0,\ldots,k-1\}$}\\
    \deg_{L}(x)-k & \text{if $\deg_{L}(x)\in \{k,\ldots,2k\}$}\\
    \deg_{L}(x)-2k & \text{if $\deg_{L}(x)\in \{2k+1,\ldots,4k\}$}\\
    k+1 & \text{if $\deg_{L}(x) \geq 4k+1$.}
\end{cases}
\end{equation}
In particular, note that $\delta(x) \geq 0$, that $\delta(x)=0$ if and only if $(\deg_{L}(x),s(x)) \in \{(0,0),(k,1)\}$, and that $\delta(x)=1$ if and only if $(\deg_{L}(x),s(x)) \in \{(1,0),(k+1,1),(2k+1,2)\}$. Also, since $\Delta^+_T = (\sum_{x \in T}\deg_{L}(x))-e$, we have
$\Delta_T = \delta(T)-e$. So to complete the proof it suffices to assume $\delta(T) \leq e-1$ and obtain a contradiction. We consider cases according to the value of $t$.\smallskip

\noindent\textbf{Case 1.} Suppose that $t\in \{1,2\}$. If $t=1$, then we immediately have the contradiction $\delta(T) \leq e-1 = -1$. If $t=2$, then it must be that $e=1$ and $\delta(x)=0$ for each $x \in T$, but this is impossible by Lemmas~\ref{L:minimumDegsNew}(i) and \eqref{E:littleDeltaBounds}.\smallskip

\noindent\textbf{Case 2.} Suppose that $t=3$. Then $e\in\{1,2,3\}$. In fact, by Lemmas~\ref{L:minimumDegsNew}(i) and \eqref{E:littleDeltaBounds}, it must be that $e=3$, $(\deg_{L}(z),s(z)) \in \{(0,0),(k,1)\}$ for some $z \in T$, and $(\deg_{L}(x),s(x)) \in \{(k+1,1),(2k+1,2)\}$ for each $x \in T \setminus\{z\}$. So $s(T) \leq 5$. Substituting this and $t=3$ into  \eqref{E:SmallDeltaTbound}, we have that $k(a-3) < 3$ if $b=1$ and $k(a-4)+3b < 6$ if $b \geq 3$. The former immediately contradicts $a \geq 4$ and hence, from the latter, we must have $a=3$ and $3 \leq b \leq \frac{1}{3}(k+5)$.
So $u=5$ and, since $e=3$ and $\deg_{\Lc}(z) \in \{2k+b-1,3k+b-1\}$, it must be the case that $c(z) \geq 2$ and $z$ is a leaf of at least two stars centred at vertices in $V \setminus T$. So, for some $y \in T \setminus \{z\}$, we have $c(y)=0$ and that $y$ is a leaf of at most three stars (note that $y$ is not a leaf of any star centred at $z$ because $e=3$). This contradicts $\deg_{\Lc}(y) \in \{k+b-2,2k+b-2\}$.  \smallskip

\noindent\textbf{Case 3.} Suppose that $t=4$. Then $0 \leq e \leq 6$ By Lemma~\ref{L:minimumDegsNew}(i) we have $\delta(y) = 0$ for at most one  vertex $y \in T$. Hence it cannot be that $\delta(x) \geq 4$ for any $x \in T$ since $\delta(T) \leq e-1 \leq 5$. So we may assume, for each $x \in T$, that $\delta(x) \leq 3$ and hence that $\deg_L(x) \leq 3k$ by \eqref{E:littleDeltaBounds}. Since $s$ is proportional, we have $s(x) \leq 2$ for each $x \in T$. Hence $s(T) \leq 8$. Substituting $t=4$ into  \eqref{E:SmallDeltaTbound}, we have that $k(2a+2-s(T)) < 6$ if $b=1$ and $k(2a+1-s(T))+4b < 10$ if $b \geq 3$. In the former case we have an immediate contradiction to $a \geq 4$ and $s(T) \leq 8$, and hence, from the latter, we must have $a=3$, $s(T)=8$ and $3 \leq b \leq \frac{1}{4}(k+9)$. So $s(x)=2$ for each $x \in T$. Thus, for each $x \in T$,  we have $\deg_L(x)=2k+\delta(x)$ and hence $s^*(x)=1-\frac{1}{2k}\delta(x)$ or, equivalently, $\delta(x) = 2k-2ks^*(x)$. So, by Lemma~\ref{L:gammaStarSum},
\[\delta(T)=8k-2k\sum_{x \in T}s^*(x) \geq 8k-2k\cdot\mfrac{4(2n-9)}{2(n-1)} = \mfrac{28k}{n-1}>7>e\]
where the second last inequality follows because $n<4k+1$ since $a=3$.
\end{proof}

Theorem~\ref{T: main theorem} now follows by combining the results we have obtained.

\begin{proof}[\textbf{\textup{Proof of Theorem~\ref{T: main theorem}}}]
Fix $n$ and $k$ and let $u=u(n,k)$. By Lemma~\ref{L:tightness}, there is an uncompletable partial $k$-star design of order $n$ with $u+1$ stars. So it suffices to show that any partial $k$-star design of order $n$ with at most $u$ stars has a completion. In fact, by Lemma~\ref{L:reduction} it suffices to show that any non-reducible partial $k$-star design of order $n$ with at most $u$ stars has a completion.

Let $(V,\A')$ be a non-reducible partial $k$-star design of order $n$ with at most $u$ stars and let $L'$ be the leftover of $(V,\A')$. If necessary, use Lemma~\ref{L:addStar} to add stars to $(V,\A')$ to form a partial $k$-star design $(V,\A)$ of order $n$ with exactly $u$ stars. It suffices to show that $(V,\A)$ has a completion or, equivalently, to show there is $k$-star decomposition of its leftover $L$.

If $k=2$, then $(V,\A)$ has a completion by Lemma~\ref{L: k=2 case}. If $k\geq 3$ and $n \leq 3k+1$, then $(V,\A)$ is completable by Lemma~\ref{L:2k+1}, Lemma~\ref{L:3k} or Lemma~\ref{L:3k+1}. So we may assume $k \geq 3$ and $n > 3k+1$. Let $L$ be the leftover of $(V,\A)$, let $s$ be a suitable $k$-precentral function for $L$, and let $T$ be a nonempty proper subset of $V$. By Lemma~\ref{L:realisation}, it suffices to show that $\Delta_T \geq 0$. This follows from Lemmas~\ref{L:midrangeT}, \ref{L:largeT} and \ref{L:smallT}.
\end{proof}

\section{Conclusion}\label{S:conclusion}

Our work above exactly determines the smallest number of stars in an uncompletable partial $k$-star design for all $k$ and $n$. One natural extension would be to investigate the same problem for \emph{$k$-star designs of index $\lambda$}, that is, $k$-star decompositions of $\lambda$-fold complete multigraphs. Since Lemma~\ref{L:realisation} can be generalised to multigraphs, many of the techniques we use here for finding completions could also be applied for designs of higher indices. Uncompletable partial designs in this setting could be obtained by taking $\lambda$-fold copies of the examples we use to prove Lemma~\ref{L:tightness}, but it is very possible that smaller uncompletable examples exist.

One could also consider analagous problems for other classes of graph design. A natural target would be $k$-cycle systems.

\bigskip\bigskip
\noindent\textbf{Acknowledgments.}
The second author was supported by Australian Research Council grants DP220102212 and
DP240101048. In the early stages of this work the first author was supported by a Postgraduate Publication Award from Monash University.


\end{document}